\documentclass[a4paper,11pt]{amsart}
\title[transformation formulas]{On transformation formulas of $p$-adic hypergeometric functions }

\usepackage{amssymb}
\usepackage{amsthm}
\usepackage{mathrsfs}
\usepackage{amsmath}  
\usepackage{graphics}  
\usepackage{centernot}
\usepackage{mathtools}
\usepackage{tikz-cd}
\usepackage{xcolor}
\usepackage{hyperref}

\author{Wang Chung-Hsuan}
\address{Independent Researcher, Tainan, Taiwan}
\email{a78sddrt@gmail.com}
\date{}
\keywords{$p$-adic hypergeometric functions, $p$-adic hypergeometric functions of logarithmic type, congruence relations, transformation formulas}
\subjclass[2020]{33E50}

\newtheorem{defi}{Definition}[section]
\newtheorem{thm}[defi]{Theorem}
\newtheorem{lem}[defi]{Lemma}
\newtheorem{cor}[defi]{Corollary}
\newtheorem{conj}[defi]{Conjecture}
\newtheorem{prop}[defi]{Proposition}
\newtheorem{remark}[defi]{Remark}
\newtheorem{eg2}{Example}
\numberwithin{equation}{section}
\setlength\parindent{0pt} 
\tikzset{
    labl/.style={anchor=south, rotate=90, inner sep=.5mm}
}
\begin{document}

\date{}
\maketitle 
\begin{abstract}
 In this paper, we review hypergeometric functions $\mathscr{F}^{\rm Dw}_{\underline{a}}(t),$ $\mathscr{F}^{(\sigma)}_{\underline{a}}(t)$ and $\widehat{\mathscr{F}}^{(\sigma)}
 _{\mathbf{a}}(t)$ together with their conjectured transformation formulas, and show that one transformation formula implies the other.
\end{abstract}

\section{Introduction}
Let $s\geq 1$ be an integer and $a_1,\cdots,a_s, b_1, \cdots b_{s-1}\in \Bbb{Z}_p$ of $p$-adic integers.
The classical hypergeometric function of one-variable
is defined to be the power series
$$
{}_sF_{s-1}\left({a_1,\ldots,a_s\atop b_1,\ldots,b_{s-1}};t\right)=\sum\limits_{k=0}^{\infty}\frac{(a_1)_k\cdots(a_s)_k}
{(b_1)_k\cdots(b_{s-1})_k}\frac{t^k}{k!}\in \Bbb{Q}_p[[t]]
$$
where $(\alpha)_k$ denotes the Pochhammer symbol $( (\alpha)_0=1, (\alpha)_k=\alpha (\alpha+1)\cdots (\alpha+k-1)$ when $k\geq 1$, cf. \cite{slater}).\medskip

Let $\underline{a}=(a_1,...,a_s)\in \Bbb{Z}_p^s$ be a $s$-tuple of $p$-adic integers, and consider
the power series
$$
F_{\underline{a} }(t)={}_sF_{s-1}\left({a_1,\ldots,a_s\atop 1,\ldots,1};t\right)=\sum\limits_{k=0}^{\infty}\frac{(a_1)_k}{k!}\cdots \frac{(a_s)_k}{k!}t^k.
$$
One can see that this is a power series with
$\mathbb{Z}_p$-coefficients.\medskip

Let $a^\prime$ be the Dwork prime of $a$, which is defined to be $(a+l) / p$ where $l\in \{0,1,...,p-1\}$ is the unique integer such that $a+l \equiv 0\mod p.$ And we write $\underline{a}^\prime=(a_1^\prime, \cdots, a_s^\prime).$\medskip

In the paper \cite{Dw}, B. Dwork introduced his $p$-adic hypergeometric functions $\mathscr{F}^{\rm Dw}_{\underline{a}}(t)$ as 
\[
 \mathscr{F}^{{\rm Dw}}_{\underline{a}}(t):=F_{\underline{a}}(t)/F_{\underline{a}^\prime}(t^p)
\]
(Definition \ref{def_Dwork_HG}) and showed that $\mathscr{F}^{\rm Dw}_{\underline{a}}(t)$ satisfy congruence relation (\cite[p.41, Theorem 3]{Dw}) 
$$ 
\mathscr{F}^{\rm Dw}_{\underline{a}}(t)\equiv \frac{[F_{\underline{a}}(t)]_{<p^n}}{[F_{\underline{a}^\prime}(t^p)]_{<p^n}} \mod{p^n\mathbb{Z}_p[[t]]}
$$ where the notation $[f(t)]_{<m}:= \sum_{n<m} c_n t^n$ denotes
the truncated 
polynomial for a power series $f(t)=\sum{c_n} t^n$. Dwork also showed that his function has some geometric application as follows. Let $p$ be an odd prime and $\alpha\in \mathbb{F}_p\backslash \{0,1\}.$ Suppose the elliptic curve
$$
E_\alpha : y^2=x(1-x)(1-\alpha x)
$$ is ordinary and $\epsilon_\alpha$ is the unit root of $E_\alpha,$ i.e., the root of $x^2-a_px+p$ which is a unit. Then
$$
\epsilon_\alpha=(-1)^{\frac{p-1}{2}}\mathscr{F}_{\frac{1}{2},\frac{1}{2}}^{\rm Dw}(\widehat{\alpha}) 
$$ where $\widehat{\alpha}$ is the Teichm\"{u}ller lift of $\alpha$ (see, for example, \cite[Theorem 0.3.8]{K} or \cite[\S 7]{V}).\medskip

In \cite{A} and \cite{W}, the $p$-adic hypergeoemtric functions of logarithmic type $\mathscr{F}^{(\sigma)}_{\underline{a}}(t)$ (Definition \ref{def_HG_1})
and $\widehat{\mathscr{F}}_{\mathbf{a}}^{\;(\sigma)}(t)$ (Definition \ref{def_HG_2}) where $\mathbf{
a}=(a,\cdots,a)$ are introduced. It is shown that these $p$-adic hypergeometric functions satisfy congruence relations similar to Dwork's (Theorem \ref{cong_rel_1} and Theorem \ref{cong_rel_2}).\medskip

The geometric significance of these functions lies in their connection to syntomic regulators; specifically, their special values explicitly describe the image of certain elements of algebraic $K$-groups under the syntomic regulator maps. Furthermore, this framework serves a bridge to investigate the $p$-adic Beilinson conjecture, linking these hypergeometric values to the special values of $p$-adic $L$-functions of elliptic curves (for details on the construction, geometric applications, and specific evaluations, cf. \cite{A, A2}).\medskip

 Two expected relations between these hypergeometric functions are proposed in \cite{W}. For simplicity, we shall denote the constant $s$-tuple $(a,\cdots,a)$ simply as $a$ when the context is clear. We conjecture the transformation formula of Dwork's $p$-adic hypergeometric functions (Conjecture \ref{Dwork-trans-conj})
 $$
\mathscr{F}^{{\rm Dw}}_{a}(t)=\pm((-1)^st)^{l}\mathscr{F}^{{\rm Dw}}_{a}(t^{-1})
$$
 and the transformation formula of $\mathscr{F}_{a}^{(\sigma)}(t)$ and $\widehat{\mathscr{F}}_{a}^{\;(\sigma)}(t)$ (Conjecture \ref{transformation-conj})
 $$
\mathscr{F}_{a}^{\;(\sigma)}(t)=-\widehat{\mathscr{F}}_{a}^{\;(\widehat{\sigma})}(t^{-1}).
$$ 
 In \cite{W}, some special cases of the transformation formula have been proved (the case $s=2$, $a\in \frac{1}{N}{\mathbb{Z}}$, $0<a<1$ and $p>N$, cf. \cite[Theorem 4.14]{W} and \cite[Theorem 4.20]{W}). This is proved geometrically by using hypergeometric curves (\cite[\S 4]{W}).\medskip

 In 2025, Nemoto generalized our result of Dwork's case (\cite{N}). He proved the following theorem.
Suppose that the following two conditions:
\begin{enumerate}
    \item  $a\in N^{-1}\Bbb{Z}$ and $p\nmid N$ for some $N\geq 2$
    \item $0<a<1$
\end{enumerate}

Then Conjecture \ref{Dwork-trans-conj} is true when $s\geq 2$. Furthermore, if $p=2$, $$
\mathscr{F}^{{\rm Dw}}_{a}(t)=-((-1)^st)^{l}\mathscr{F}^{{\rm Dw}}_{a}(t^{-1})
$$ holds if and only if $s$ is odd and $a^\prime \equiv 1 \pmod{2}$.
 
In this paper, we show that these two transformation formulas are closely related. Specifically, our main result (Theorem \ref{conj2_to_conj1}) establishes that transformation formula of Dwork's $p$-adic hypergeometric functions implies transformation formula of $\mathscr{F}_{a}^{(\sigma)}(t)$ and $\widehat{\mathscr{F}}_{a}^{\;(\sigma)}(t)$. 
As an immediate corollary, we deduce that Conjecture \ref{transformation-conj} holds when $s=1$, or when $s\geq 2$ with $0<a<1$, $a\in N^{-1}\mathbb{Z}$, and $p\nmid N$ for some $N\geq 2$. \medskip

This paper is organized as follows. In \S 2, we recall the definition of $p$-adic digamma functions and $p$-adic Euler constant which we need to define the hypergeometric functions. In \S 3, we recall the definition and some properties of these $p$-adic hypergeometric functions. In \S 4, we prove some relation between coefficients of $p$-adic hypergeometric functions of logarithmic type. One of our main result is Lemma \ref{beta+hbeta}, which provides a relation between the coefficients 
of $\mathscr{F}_{a}^{(\sigma)}(t)$ and $\widehat{\mathscr{F}}_{a}^{\;(\sigma)}(t)$. In \S 5, we first recall the conjectures between hypergeometric functions  $\mathscr{F}_{a}^{\;(\sigma)}(t)$ and $\widehat{\mathscr{F}}_{a}^{\;(\sigma)}(t)$ and then move on to the transformation formula of Dwork's $p$-adic hypergeometric functions. Finally, we give the proof of our main result (Theorem \ref{conj2_to_conj1}) using the relation in \S 4 (Lemma \ref{beta+hbeta}).

\medskip

\textbf{Acknowledgement.} I would like to thank for the help of Professor Po-Yi Huang. He gave a lot of helpful advice and comments on this paper.  


\section{$p$-adic Digamma functions and $p$-adic Euler Constant}\label{digamma}
Let us recall the definition of the $p$-adic digamma functions in \cite[\S 2.2]{A} which appear in the definition of $p$-adic hypergeometric functions.\medskip

Let $r\in \mathbb{Z}$ and $z\in \mathbb{Z}_p,$ we define
$$
\widetilde{\psi}_p^{(r)}(z)=\lim\limits_{n\in \mathbb{Z}_{\geq >0}, n\rightarrow z} \sum\limits_{1\leq k<n, p\nmid k}\frac{1}{k^{r+1}}.
$$

These functions are $p$-adic continuous function on $\mathbb{Z}_p$ since
$$
\sum\limits_{1\leq k<p^s, p\nmid k}{k^m}\equiv\left\{
\begin{aligned}
&-p^{s-1} \quad &p\geq 3 \; \text{and} \; (p-1) | m \\
&2^{s-1}  \quad &p=2  \; \text{and}  \; 2|m \\
&1  \quad &p=2  \; \text{and} \; s=1 \\
&0  \quad &\text{otherwise}
\end{aligned}
\right.
$$
modulo $p^s.$\medskip
\begin{defi}\label{padic_euler_const}
We define \textbf{$p$-adic Euler constant} 
$$
\gamma_p=-\lim\limits_{s\rightarrow \infty}\frac{1}{p^s}\sum\limits_{0\leq j<p^s, p\nmid j}\log(j)
$$ where $\log$ is Iwasawa logarithm.\medskip
\end{defi}

\begin{defi}\label{padic_digamma}
We define the \textbf{$p$-adic digamma function} to be
$$
\psi_p(z)=-\gamma_p+\widetilde{\psi}_p^{(r)}(z).
$$\medskip
\end{defi}

Here we list some formulas on $p$-adic digamma function.
\begin{thm}\label{formula_digamma}
$(1)$ $\psi_p(0)=\psi_p(1)=-\gamma_p.$ \medskip

$(2)$ $\psi_p(z)=\psi_p(1-z).$ \medskip

$(3)$ 
$\psi_p(z+1)-\psi_p(z)=z^{-1}$ if $z\in \mathbb{Z}_p^\times$ and $0$ if $z\in p\mathbb{Z}_p.$
\end{thm}
\begin{proof}
See \cite[Theorem 2.4]{A}.
\end{proof}

\section{$\mathscr{F}_{\underline{a}}^{\rm Dw}(t)$,  $\mathscr{F}_{\underline{a}}^{\;(\sigma)}(t)$ and $\widehat{\mathscr{F}}_{\mathbf{a}}^{\;(\sigma)}(t)$}\label{definition}
In this section, we review the definition of $p$-adic hypergeometric functions $\mathscr{F}_{\underline{a}}^{\rm Dw}(t)$, $\mathscr{F}_{\underline{a}}^{\;(\sigma)}(t)$ and $\widehat{\mathscr{F}}_{\mathbf{a}}^{\;(\sigma)} (t)$ and some of their properties.\medskip

We start with the definition of $p$-adic hypergeometric power series. 
\begin{defi}
 Let $s\geq 1$ be an integer and $\underline{a}=(a_1,...,a_s)$ be a $s$-tuples  of $p$-adic integers. We define
the \textbf{$p$-adic hypergeometric power series}
$$
F_{\underline{a}}(t):=\sum\limits_{k=0}^{\infty}\frac{(a_1)_k}{k!}\cdots \frac{(a_s)_k}{k!}t^k
$$ where $(\alpha)_k$ denotes the Pochhammer symbol $( (\alpha)_0=1, (\alpha)_k=\alpha (\alpha+1)\cdots (\alpha+k-1)$ when $k\geq 1)$. Since
$$
\frac{(\alpha)_k}{k!}=(-1)^{k}\binom{-\alpha}{k},
$$ the $p$-adic hypergeometric power series is a power series with $\mathbb{Z}_p$-coefficients.\medskip

In particular, if all components of the tuple are identical, i.e., $\underline{a}=(a,\cdots,a)\in \Bbb{Z}_p^s$, we write $\mathbf{a}=(a,\cdots,a)$ and $F_{\mathbf{a}}(t)=F_{\underline{a}}(t)$ to denote this symmetric case.
\end{defi}

\begin{eg2}
Let $s=1$ and write $a=a_1.$ Then 
$$
F_a(t)=(1-t)^{-a}.
$$
\end{eg2}

\begin{defi}
 For any $a \in \mathbb{Z}_p$, we define the \textbf{Dwork prime} of $a$ to be $a^\prime=(a+l) / p$ where $l\in \{0,1,...,p-1\}$ is the unique integer such that $a+l \equiv 0\mod p $. \medskip
 
 Inductively, we define the $i$-th Dwork prime $a^{(i)}$ to be $(a^{(i-1)})^\prime$ with $a^{(0)}=a.$
\end{defi}

Now we can define Dwork's $p$-adic hypergeometric functions as follow.
\begin{defi}[{\cite{Dw},\textbf{Dwork's $p$-adic hypergeometric functions}}]\label{def_Dwork_HG}
Let $s \geq 1$ be an integer. For $(a_1,\cdots,a_s)\in \mathbb{Z}_p^s$, Dwork's $p$-adic hypergeometric function is defined as
$$
 \mathscr{F}^{{\rm Dw}}_{\underline{a}}(t):=F_{\underline{a}}(t)/F_{\underline{a}^\prime}(t^p)
$$
where $F_{\underline{a}}(t)$ and $F_{\underline{a}^\prime}(t)$ are hypergeometric power series.
\end{defi}

We give a simple example of Dwork's $p$-adic hypergeometric functions. 
\begin{eg2}
Let $\underline{a}=(1,1,\cdots,1).$ Since the Dwork prime of $1$ is $1,$ we have
$$
\mathscr{F}^{\rm Dw}_{\underline{a}}(t)=F_{\underline{a}}(t)/F_{\underline{a}^\prime}(t^p)=\frac{\sum t^n}{\sum t^{pn}}=\frac{\frac{1}{1-t}}{\frac{1}{1-t^p}}=1+t+\cdots+t^{p-1}.
$$
\end{eg2}

We now introduce the definitions of the other two $p$-adic hypergeometric functions, which are known as $p$-adic hypergeometric functions of logarithmic type, proposed by Asakura (\cite[Definition 3.1]{A}) and Wang (\cite[Definition 2.1]{W}). To state their definitions, we first set up the general algebraic framework.\medskip

Let $W=W(\overline{\mathbb{F}}_{p})$ denote the Witt ring, and $K$=Frac$W$ its fractional field. Let $\sigma : W[[t]] \rightarrow W[[t]]$ be a $p$-th Frobenius given by $\sigma(t)=ct^p$ with $c\in 1+pW$:
\begin{equation}
\biggl(\sum_{i}a_{i}t^i\biggr)^{\sigma}=\sum_{i}a_{i}^{F}c^it^{ip}
\end{equation}
where $F: W \rightarrow W$ is the Frobenius on $W$.\medskip

Put
$$ q:=\left\{
\begin{aligned}
4 &\quad p=2 \\
p  &\quad p\geq3. \\
\end{aligned}
\right.
$$
and
$$
e:=l^\prime-\lfloor \frac{l^\prime}{p} \rfloor.
$$
where $l^\prime\in \{0,1,...,q-1\}$ be the unique integer such that $a+l^\prime \equiv 0\mod q $. \medskip
We now recall the definition of $p$-adic hypergeometric functions.

\begin{defi}[{\cite[Definition 3.1]{A}}, \textbf{$p$-adic hypergeometric functions of logarithmic type}]\label{def_HG_1}
Let $s\geq 1$ be a positive integer. Let $\underline{a}=(a_1,\cdots, a_s)\in \mathbb{Z}_p$ and $\underline{a}^\prime=(a_1^\prime,\cdots,a_s^\prime)$ where $a_{i}^\prime$ denotes the Dwork prime. Let $\sigma : W[[t]]\rightarrow W[[t]]$ be the $p$-th Frobenius endomorphism given by $\sigma(t)=ct^p$ with $c\in 1+pW.$ Then we define the $p$-adic hypergeometric functions of logarithmic type to be
$$
\mathscr{F}^{(\sigma)}_{\underline{a}}(t):=\frac{G^{(\sigma)}_{\underline{a}}(t)}{F_{\underline{a}}(t)}
$$ with
$$
G^{(\sigma)}_{\underline{a}}(t)=\psi_p(a_1)+\cdots+\psi_p(a_s)+s\gamma_p-p^{-1}\log(c)+\int_{0}^{t}(F_{\underline{a}}(t)-F_{\underline{a}^\prime}(t^{\sigma}))\frac{dt}{t}
$$ where
$\psi_p(z)$ is the $p$-adic digamma function $($Definition \ref{padic_digamma}$)$, $\gamma_p$ is the $p$-adic Euler constant $($Definition \ref{padic_euler_const}$)$ 
and $\log(z)$ is the Iwasawa logarithmic function.
\end{defi}
\begin{defi}[{\cite[Definition 2.1]{W}}]\label{def_HG_2}
Let $s\geq 1$ be a positive integer, $a\in\mathbb{Z}_p \backslash \mathbb{Z}_{\leq 0}$ and $\sigma : W[[t]] \to W[[t]]$ be the $p$-th Frobenius endomorphism given by $\sigma(t)=ct^p$ with $c\in 1+pW$. Fix $\underline{a}=(a,\cdots,a).$ Then we define
$$
\widehat{\mathscr{F}}_{\mathbf{a}}^{\;(\sigma)}(t):=\frac{\widehat{G}^{(\sigma)}_{\mathbf{a}}(t)}{F_{\mathbf{a}}(t)}
$$ where
$$
\widehat{G}^{(\sigma)}_{\mathbf{a}}(t)=t^{-a} \int_{0}^{t} (t^aF_{\mathbf{a}}(t)-(-1)^{se}[t^{a^\prime}F_{\mathbf{a}^\prime}(t)]^{\sigma})\frac{dt}{t}
$$ and $\mathbf{a}=(a,\cdots,a), \ \mathbf{a}^\prime=(a^\prime,\cdots, a^\prime)$ are $s$-tuples.
\end{defi}

\begin{remark}
Here we think $t^{\alpha}$ to be an abstract symbol with relations $t^{\alpha}\cdot t^{\beta}= t^{\alpha+\beta}$, on which $\sigma$ acts by $\sigma(t^{\alpha})=c^{\alpha}t^{p\alpha}$, where 
$$
c^{\alpha}=(1+pu)^{\alpha}:=\sum_{i=0}^{\infty}\binom{\alpha}{i}p^{i}u^i, \quad u\in W.  
$$
Moreover we think $\int_{0}^{t}(-)\frac{dt}{t}$ to be a operator such that
$$
\int_{0}^{t}t^{\alpha}\frac{dt}{t}=\frac{t^{\alpha}}{\alpha}, \quad \alpha \neq 0. 
$$
\end{remark}


Now let us recall the so-called congruence relations for these hypergeometric functions. Using this property, one can define special values of the function.\medskip

For a power series $f(t)=\sum^{\infty}_{i=0}a_it^i\in W[[t]]$, we denote $f(t)_{<n}:=\sum_{i=0}^{n-1}a_it^i$ the truncated polynomial. Then the congruence relations for these hypergeometric functions are given as follows.

\begin{thm}[\textbf{congruence relation of $\mathscr{F}^{\rm Dw}_{\underline{a}}(t)$}]\label{cong_rel_Dw}
Let $\underline{a}=(a_1,\cdots, a_s)$ with $a_i\in \mathbb{Z}_p$. We have
$$
\mathscr{F}^{\rm Dw}_{\underline{a}}(t)\equiv \frac{F_{\underline{a}}(t)_{<p^n}}{[F_{\underline{a}^\prime}(t^p)]_{<p^n}} \mod p^n\mathbb{Z}_p[[t]]
$$
\end{thm} 
\begin{proof}
See \cite[p.37, Theorem 2]{Dw}.
\end{proof}

\begin{thm}[\textbf{congruence relation of $\mathscr{F}^{(\sigma)}_{\underline{a}}(t)$}]\label{cong_rel_1}
Let $\underline{a}=(a_1,\cdots, a_s)$ with $a_i \in \mathbb{Z}_p\backslash \mathbb{Z}_{\leq 0}.$ Then
$$
\mathscr{F}^{(\sigma)}_{\underline{a}}(t)\equiv \frac{G^{(\sigma)}_{\underline{a}}(t)_{<p^n}}{F_{\underline{a}}(t)_{<p^n}} \mod p^nW[[t]]
$$ if $c\in 1+qW.$ If $p=2$ and $c\in 1+2W$, then the congruence holds modulo $p^{n-1}.$
\end{thm}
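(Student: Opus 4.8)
The plan is to turn the claimed congruence of quotients into a congruence of honest power series and then reduce it to a single congruence between consecutive truncation levels, which is where Dwork's congruences enter.

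First I would record that both $F_a(t)$ and $F_a(t)_{<p^n}$ have constant term $A_0=1$, hence are units in $W[[t]]$; consequently the quotient $G^{(\sigma)}_a(t)_{<p^n}/F_a(t)_{<p^n}$ already lies in $W[[t]]$, and for each $m$ I may set
$$r_m(t):=\frac{G^{(\sigma)}_a(t)_{<p^m}}{F_a(t)_{<p^m}}\in W[[t]].$$
Two observations reduce the theorem to a statement about the $r_m$. First, $r_m\to \mathscr{F}^{(\sigma)}_a=G^{(\sigma)}_a/F_a$ coefficientwise: for $p^m>N$ the degree-$\le N$ parts of $G^{(\sigma)}_a(t)_{<p^m}$, of $F_a(t)_{<p^m}$, and of $(F_a(t)_{<p^m})^{-1}$ already agree with those of $G^{(\sigma)}_a$, $F_a$ and $F_a^{-1}$, so the coefficient of $t^N$ in $r_m$ equals that of $\mathscr{F}^{(\sigma)}_a$. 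Second, it therefore suffices to prove the consecutive-level estimate
$$r_{m+1}(t)\equiv r_m(t)\pmod{p^{m}W[[t]]}\qquad(\text{resp. }p^{m-1}\text{ when }p=2,\ c\in 1+2W),$$
since telescoping this from level $n$ upward gives $\mathscr{F}^{(\sigma)}_a\equiv r_n\pmod{p^nW[[t]]}$ (resp. $p^{n-1}$), which is exactly the assertion, and also accounts for the one-power loss in the $p=2$ case.

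Clearing the unit denominators, the consecutive-level estimate is equivalent to
$$G^{(\sigma)}_a(t)_{<p^{m+1}}\,F_a(t)_{<p^m}\equiv G^{(\sigma)}_a(t)_{<p^m}\,F_a(t)_{<p^{m+1}}\pmod{p^mW[[t]]},$$
and writing each level-$(m+1)$ truncation as its level-$m$ truncation plus the block of degrees in $[p^m,p^{m+1})$, the left minus right side collapses to
$$G^{(\sigma)}_{[p^m,p^{m+1})}(t)\,F_a(t)_{<p^m}-G^{(\sigma)}_a(t)_{<p^m}\,F_{[p^m,p^{m+1})}(t),$$
a difference of two convolutions involving only the coefficients $A_k$ and $B_k^{(\sigma)}$ for $p^m\le k<p^{m+1}$. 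The heart of the proof is to show this difference lies in $p^mW[[t]]$. Here I would feed in the formula $B_k^{(\sigma)}=(A_k-c^{k/p}A^{(1)}_{k/p})/k$ together with Dwork's congruences for the hypergeometric coefficients --- the Lucas-type relations comparing $A_k$ with products of Dwork-prime coefficients $A^{(i)}_{k_i}$ along the base-$p$ digits $k=\sum_i k_ip^i$, the same mechanism behind the congruence for $\mathscr{F}^{\rm Dw}_a$ quoted in the introduction. The point is that the required $p^m$-divisibility is \emph{not} present termwise (for $p\nmid k$ one only has $B_k^{(\sigma)}=A_k/k$, which may well be a unit); it emerges from cancellation between the two convolutions, and it is precisely this cancellation that the Dwork congruences encode.

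The main obstacle is the simultaneous bookkeeping of two sources of $p$-adic denominators: the division by $k$ in $B_k^{(\sigma)}$, and the powers $c^{k/p}$ together with the term $p^{-1}\log(c)$ sitting in $B_0^{(\sigma)}$. Controlling the latter is exactly what forces the hypothesis $c\in 1+qW$: one needs $v_p(\log c)$ and the valuations of the relevant $c^{k/p}-1$ to be large enough that, after dividing by $k$, everything still survives modulo $p^m$. When $p=2$ and only $c\in 1+2W$, one has $v_2(\log c)=1$ rather than $\ge 2$, so each such estimate loses a single factor of $2$; propagating this through the telescoping is what degrades the final modulus from $p^n$ to $p^{n-1}$. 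I expect the delicate part to be making the Dwork-congruence cancellation uniform in $k$ across the entire block $[p^m,p^{m+1})$ while tracking these $c$-valuations, rather than any one isolated estimate.
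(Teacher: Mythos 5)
First, note that the paper does not actually prove this theorem: its ``proof'' is the single line ``This is \cite[Theorem 3.3]{A}'', so there is no in-paper argument to compare against beyond that citation. Your proposal must therefore be judged as a standalone proof attempt, and as such it has a genuine gap. The reductions you perform are all correct and essentially cost-free: the truncated quotients $r_m$ are well defined units times power series, they converge coefficientwise to $\mathscr{F}^{(\sigma)}_a$, telescoping reduces everything to the consecutive-level congruence, and clearing denominators collapses that to showing
$$
G^{(\sigma)}_{[p^m,p^{m+1})}(t)\,F_a(t)_{<p^m}-G^{(\sigma)}_a(t)_{<p^m}\,F_{[p^m,p^{m+1})}(t)\equiv 0 \pmod{p^mW[[t]]}.
$$
But this displayed congruence \emph{is} the theorem; everything before it is bookkeeping. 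At exactly this point you write that you ``would feed in'' the formula for $B_k^{(\sigma)}$ together with ``Dwork's congruences'' and that you ``expect the delicate part'' to be making the cancellation uniform in $k$. That is a statement of intent, not an argument: you correctly observe that the divisibility is not termwise and must come from cancellation between the two convolutions, but you give no mechanism that produces the cancellation.

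The actual mechanism (in \cite[Theorem 3.3]{A}, and visible in this paper's own use of the same machinery in the proof of Theorem \ref{conj2_to_conj1}) is not a block decomposition of the convolution by degree but a decomposition by residue classes: one groups the sum $\sum_{i+j=m}B_iA_j$ according to $i \bmod p^d$, uses the interpolation property that $B_i/A_i$ depends only on $i \bmod p^n$ modulo $p^n$ (this paper's Theorem \ref{congBA}, i.e.\ \cite[Lemma 3.9]{A}), and then invokes a separate congruence for the pure $A$-sums $\sum_{i\equiv k}A_iA_j$ over residue classes (\cite[Lemma 3.12]{A}, the analogue of Lemma \ref{A-cong} here), descending through the moduli $p^n, p^{n-1},\dots$ one step at a time. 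Neither of these two key lemmas, nor the descent structure that combines them, appears in your proposal; the phrase ``Lucas-type relations along the base-$p$ digits'' gestures at the right circle of ideas but does not substitute for them. Your diagnosis of the role of $c\in 1+qW$ versus $c\in 1+2W$ (controlling $v_p(\log c)$ and $v_p(c^{k/p}-1)$ against the division by $k$) is plausible and consistent with why the $p=2$ case loses one power, but it too is asserted rather than carried out. In short: correct framing and correct reductions, but the core of the proof is missing.
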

\begin{proof}
This is \cite[Theorem 3.3]{A}.
\end{proof}

\begin{thm}[\textbf{congruence relation of $\widehat{\mathscr{F}}^{\; (\sigma)}_{\mathbf{a}}(t)$}]\label{cong_rel_2}
Let $\mathbf{a}=(a,\cdots,a)$ with $a\in \mathbb{Z}_p\backslash \mathbb{Z}_{\leq 0}$ and suppose that $c\in 1+qW.$ Then
$$
\widehat{\mathscr{F}}^{\; (\sigma)}_{\mathbf{a}}(t)\equiv \frac{\widehat{G}^{(\sigma)}_{\mathbf{a}}(t)_{<p^n}}{F_{\mathbf{a}}(t)_{<p^n}} \mod p^nW[[t]]
$$ for all $n\in \mathbb{Z}_{\geq 0}.$
\end{thm}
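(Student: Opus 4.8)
The plan is to reduce the stated congruence between the two ratios to an equivalent congruence between genuine power series obtained by clearing denominators, and then to establish that congruence coefficient-by-coefficient using Dwork-type congruences for the hypergeometric coefficients $A_k,A^{(1)}_m$ together with $p$-adic control of the Frobenius factor $c^{(k+a)/p}$. First I would observe that $F_a(t)$ and its truncation $F_a(t)_{<p^n}$ both have constant term $A_0=1$, hence are units in $W[[t]]$; since multiplication by a unit is a bijection preserving $p^nW[[t]]$, the asserted congruence
$$\widehat{\mathscr{F}}^{\;(\sigma)}_a(t)=\frac{\widehat{G}^{(\sigma)}_a(t)}{F_a(t)}\equiv \frac{\widehat{G}^{(\sigma)}_a(t)_{<p^n}}{F_a(t)_{<p^n}}\pmod{p^nW[[t]]}$$
is equivalent, after multiplying through by the unit $F_a(t)\,F_a(t)_{<p^n}$, to
$$\widehat{G}^{(\sigma)}_a(t)\,F_a(t)_{<p^n}\equiv \widehat{G}^{(\sigma)}_a(t)_{<p^n}\,F_a(t)\pmod{p^nW[[t]]}.$$
Writing $f_{\geq p^n}:=f-f_{<p^n}$ and cancelling the common term $\widehat{G}_{<p^n}F_{<p^n}$, this is in turn equivalent to
$$\widehat{G}^{(\sigma)}_a(t)_{\geq p^n}\,F_a(t)_{<p^n}\equiv \widehat{G}^{(\sigma)}_a(t)_{<p^n}\,F_a(t)_{\geq p^n}\pmod{p^nW[[t]]}.$$
Both sides have lowest degree $\geq p^n$, so it suffices to prove the congruence of $t^N$-coefficients for every $N\geq p^n$.

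Second, I would use the explicit coefficient formula recorded before the theorem, namely
$$\widehat{B}^{(\sigma)}_k=\frac{1}{k+a}\Bigl(A_k-(-1)^{se}A^{(1)}_{(k-l)/p}\,c^{(k+a)/p}\Bigr),$$
where $A^{(1)}_{(k-l)/p}=0$ unless $k\equiv l\pmod p$ and $k\geq l$; here integrality $\widehat{G}^{(\sigma)}_a\in W[[t]]$ is already guaranteed by \cite[Lemma 2.2]{W}, even though $\tfrac{1}{k+a}$ has negative valuation precisely when $k\equiv l\pmod p$. The two ingredients I would feed into the coefficient comparison are: (i) Dwork's congruences for the hypergeometric coefficients, relating $A_k$ for large $k$ to $A^{(1)}_{\lfloor k/p\rfloor}$ and to lower-index $A$'s modulo powers of $p$ (the same input underlying Dwork's theorem quoted in the introduction and Asakura's Theorem \ref{cong_rel_1}); and (ii) control of the Frobenius factor $c^{(k+a)/p}$. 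For (ii) the hypothesis $c\in 1+qW$ is exactly what is needed: it gives $c^{p^{n-1}}\equiv 1\pmod{p^n}$ (with a power to spare when $p=2$, which is why $q=4$ is demanded there), so that $c^{(k+a)/p}\equiv c^{(k'+a)/p}\pmod{p^n}$ whenever $k\equiv k'\pmod{p^n}$ and both carry the $A^{(1)}$-factor; thus the $c$-powers attached to large indices may be replaced by those of reduced indices without losing precision.

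The decisive step, and the one I expect to be the main obstacle, is organizing the coefficient comparison so that the large-index contributions telescope against the low-index ones modulo $p^n$ despite the $\tfrac{1}{k+a}$ poles. Concretely, in the $t^N$-coefficient of the last displayed identity one must match, modulo $p^n$, each product $\widehat{B}_iA_j$ with $i\geq p^n$ against a product $\widehat{B}_{i'}A_{j'}$ with $i'<p^n$; the matching is dictated by writing the indices in base $p$ and using (i) to collapse a factor of $A$ to an $A^{(1)}$ at the reduced index while (ii) simultaneously collapses the companion $c$-power. The delicate point is that the denominator $k+a$ contributes negative valuation exactly on the terms carrying the $A^{(1)}c$-factor, so the numerator cancellation $A_k-(-1)^{se}A^{(1)}_{(k-l)/p}c^{(k+a)/p}$ must be shown to recover the lost power of $p$ uniformly in $k$; this is precisely where the interplay of Dwork's congruence and the $c\in 1+qW$ estimate has to be made quantitative, and it is the heart of the argument. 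Once this valuation bookkeeping is carried out for all $N\geq p^n$, the equivalent polynomial congruence of the first step follows, and with it the theorem; since the estimates invoke $c\in 1+qW$ throughout, the conclusion holds modulo $p^n$ for every $n\geq 0$ with no exceptional case.
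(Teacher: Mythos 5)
There is a genuine gap, and you flag it yourself: the entire technical content of the theorem is concentrated in what you call ``the decisive step,'' and your proposal describes what that step must accomplish without carrying it out. The first reduction (clearing the unit denominators to get $\widehat{G}^{(\sigma)}_a(t)_{\geq p^n}F_a(t)_{<p^n}\equiv \widehat{G}^{(\sigma)}_a(t)_{<p^n}F_a(t)_{\geq p^n}$) is correct and standard, and your two ingredients (Dwork congruences for the $A_k$, and $c\in 1+qW$ controlling $c^{(k+a)/p}$) are indeed the right raw inputs. But the claim that ``the numerator cancellation $A_k-(-1)^{se}A^{(1)}_{(k-l)/p}c^{(k+a)/p}$ must be shown to recover the lost power of $p$ uniformly in $k$'' is precisely the theorem's difficulty, and asserting that it ``has to be made quantitative'' is not a proof of it. Note also that the paper itself does not prove this statement: its proof is a citation to \cite[Theorem 3.1]{W}, so the machinery you would need is developed there (and its analogue for $\mathscr{F}^{(\sigma)}_a$ in \cite[Theorem 3.3]{A}).

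Beyond being incomplete, your proposed mechanism for the coefficient comparison is likely not the one that works. You suggest matching each individual product $\widehat{B}_iA_j$ with $i\geq p^n$ against a single product $\widehat{B}_{i'}A_{j'}$ with $i'<p^n$ via base-$p$ expansions; such a term-by-term pairing modulo $p^n$ is generally unavailable. The argument that actually succeeds (and which is mirrored in this paper's own proof of Theorem \ref{conj2_to_conj1}) isolates the ratio $\widehat{\beta}_k=\widehat{B}_k/A_k$, proves it lies in $W$ and depends only on $k\bmod p^n$ modulo $p^n$ (Theorem \ref{congBA}, i.e.\ \cite[Lemma 3.5]{W}), and then groups the sum $\sum_{i+j=N}\widehat{B}_iA_j=\sum_k\widehat{\beta}_k\sum_{i\equiv k}A_iA_j$ by residue classes, feeding in partial-sum congruences of the type $\sum_{i\equiv k\ (p^{n-d})}A_iA_{N-i}\equiv\cdots\pmod{p^{d+1}}$ and running a descending induction on $d$ that trades one power of $p$ in the modulus of the residue class for one power of $p$ in the congruence. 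Without identifying $\widehat{\beta}_k$ as the object whose $p$-adic continuity drives the proof, and without the residue-class/induction bookkeeping, the valuation loss from $\tfrac{1}{k+a}$ on the terms with $k\equiv l\pmod p$ cannot be absorbed.
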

\begin{proof}
This is \cite[Theorem 3.1]{W}.
\end{proof}

\begin{cor}\label{overconvergent}
Suppose there exists an integer $r\geq 0$ such that $a_i^{(r)}=a_i$ and  $a^{(r)}=a$ for some $r>0,$ where $(-)^{(r)}$ denote the $r$-th Dwork prime, then
$$
{\mathscr{F}}_{\underline{a}}^{\;(\sigma)}(t)\in W\langle t, t^{-1}, h_1(t)^{-1} \rangle, \quad h_1(t):=\prod\limits_{i=0}^{r-1}F_{\underline{a}^{(i)}}(t)_{<p}
$$
is convergent function, where $W\langle t, t^{-1}, h_1(t)^{-1} \rangle :=\underset{n}{\varprojlim}(W/p^n[t,t^{-1},h_1(t)^{-1}])$.
and
$$
\widehat{\mathscr{F}}_{\mathbf{a}}^{\;(\sigma)}(t)\in W\langle t, t^{-1}, h_2(t)^{-1} \rangle, \quad h_2(t):=\prod\limits_{i=0}^{r-1}F_{\mathbf{a}^{(i)}}(t)_{<p}
$$
is convergent function, where $W\langle t, t^{-1}, h_2(t)^{-1} \rangle :=\underset{n}{\varprojlim}(W/p^n[t,t^{-1},h_2(t)^{-1}])$.
\end{cor}
\begin{proof}
    See \cite[Corollary 3.4]{A} and \cite[Corollary 3.2]{W}. 
\end{proof}

\section{Congruence Relations for $B_k/A_k$ and $\widehat{B}_k/A_k$}
In what follows, we fix an $a\in\mathbb{Z}_p\backslash \mathbb{Z}_{\leq 0}$ and $\underline{a}=\mathbf{a}=(a,\cdots,a)\in \mathbb{Z}_p^s.$ For simplicity we write $F_a(t):=F_{\underline{a}}(t)$ (similar for $G^{(\sigma)}_{\underline{a}}(t),$ $\widehat{G}^{(\sigma)}_{\underline{a}}(t),$ $\mathscr{F}^{(\sigma)}_{\underline{a}}(t)$ and $\widehat{\mathscr{F}}^{\; (\sigma)}_{\mathbf{a}}(t)).$\medskip

Here we provide an explicit expression for the coefficient of these $p$-adic hypergeometric functions. If we write $F_{a}(t)=\sum A_{k}t^k$, $F_{a^\prime}(t)=\sum A_k^{(1)}t^k$, $G^{(\sigma)}_{a}(t)=\sum B^{(\sigma)}_kt^k$ and $\widehat{G}^{(\sigma)}_{a}(t)=\sum \widehat{B}_k^{(\sigma)}t^k$, then the coefficients are given by
$$
B_0^{(\sigma)}=s\psi_p(a)+s\gamma_p-p^{-1}\log(c), \quad B_k^{(\sigma)}=\frac{A_k-c^{k/p}A^{(1)}_{k/p}}{k}, \quad k\in \mathbb{Z}_{> 0}
$$
and
$$
\widehat{B}_k^{(\sigma)}=\frac{1}{k+a}\biggl(A_{k}-(-1)^{se}(A_{\frac{k-l}{p}}^{(1)})c^{\frac{k+a}{p}}\biggr),
$$
where $A_{\frac{m}{p}}^{(1)}=0$ if $m \not\equiv 0 \mod p$ or $m <0$.
In fact, we have $\mathscr{F}_{a}^{\;(\sigma)}(t)$, $G^{(\sigma)}_{a}(t)$, $\widehat{\mathscr{F}}_{a}^{\;(\sigma)}(t)$ and $\widehat{G}^{(\sigma)}_{a}(t)$ are power series with $W$-coefficients.(\cite[Lemma 2.2]{W})\medskip

Here we put $\sigma:W[[t]]\rightarrow W[[t]]$ and $ \widehat{\sigma}:W[[t]\rightarrow W[[t]]$ by $\sigma(t)=ct^p$ and $\widehat{\sigma}(t)=c^{-1}t^p$ where $c\in 1+pW.$ For simplicity, in what follows, we write $B_k$ for $B_k^{(\sigma)}$ and $\widehat{B}_k$ for $\widehat{B}_k^{(\widehat{\sigma})}.$ \medskip

Let us recall the following properties for $B_k/A_k$ and $\widehat{B}_k/A_k$.
\begin{thm}\label{congBA}
$B_k/A_k$ and $\widehat{B}_k/A_k$ are in the Witt ring $W(\overline{\mathbb{F}}_p)$. 
Furthermore, for any $k, k^\prime\in \mathbb{Z}_{\geq 0}$ and $n\in \mathbb{Z}_{>0}$, we have
$$
\frac{B_k}{A_k}\equiv \frac{B_{k^\prime}}{A_k} \mod p^n, \quad \frac{\widehat{B_k}}{A_k}\equiv \frac{\widehat{B}_{k^\prime}}{A_k} \mod p^n
$$ if $k\equiv k^\prime \mod p^n.$
\end{thm}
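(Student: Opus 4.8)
The plan is to reduce both claims to a single sharp congruence and then prove that congruence with the $p$-adic Gamma function. Since for $k>0$ one has $B_k=\frac{A_k-c^{k/p}A^{(1)}_{k/p}}{k}$, it is convenient to set
\[
T_k:=c^{k/p}\,\frac{A^{(1)}_{k/p}}{A_k}\quad(p\mid k),\qquad T_k:=0\quad(p\nmid k),
\]
so that $\frac{B_k}{A_k}=\frac{1-T_k}{k}$. Writing $v_p$ for the valuation of $W$ normalised by $v_p(p)=1$, the integrality statement $\frac{B_k}{A_k}\in W$ is then exactly $T_k\equiv 1\bmod p^{\,v_p(k)}$, while the congruence statement is the assertion that $k\mapsto\frac{B_k}{A_k}$ depends on $k$ only modulo $p^n$, modulo $p^n$ (so that it extends to a continuous function on $\mathbb{Z}_p$, whose value at $0$ is the constant $B_0=s\psi_p(a)+s\gamma_p-p^{-1}\log c$). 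For $p\nmid k$ everything is trivial, because $T_k=0$ and $\tfrac1k$ is a unit depending continuously on $k$ away from $p\mathbb{Z}_p$; so the content is concentrated on $p\mid k$.

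For $p\mid k$, write $k=pm$. First I would record that $A^{(1)}_m/A_{pm}$ is a $p$-adic unit: grouping the factors of $(a)_{pm}$ and of $(pm)!$ by their residues modulo $p$, the factors divisible by $p$ contribute exactly $p^m(a')_m$ and $p^m m!$ (this is precisely what the Dwork prime $a'=(a+l)/p$ is designed to arrange), so $\frac{A_{pm}}{A^{(1)}_m}$ is a ratio of products of units and $v_p(A_{pm})=v_p(A^{(1)}_m)$. Reading the same grouping through Morita's functional equation for $\Gamma_p$ makes the unit explicit,
\[
\frac{A_{pm}}{A^{(1)}_m}=(-1)^{s}\left(\frac{\Gamma_p(a+pm)}{\Gamma_p(a)\,\Gamma_p(pm+1)}\right)^{s},\qquad\text{hence}\qquad T_{pm}=(-1)^{s}\,c^{m}\left(\frac{\Gamma_p(a)\,\Gamma_p(pm+1)}{\Gamma_p(a+pm)}\right)^{s}.
\]
Because $\Gamma_p$ takes values in units, $T_{pm}$ is a unit, and because $m\mapsto\Gamma_p(a+pm)$ and $m\mapsto\Gamma_p(pm+1)$ are locally analytic, $m\mapsto T_{pm}$ is a locally analytic function of $m\in\mathbb{Z}_p$ with $T_0=(-1)^s\Gamma_p(1)^s=1$.

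From this the two assertions become statements about $\Gamma_p$. Note first that $c^m\equiv 1\bmod p^{\,v_p(m)+1}$ for $c\in1+pW$, so $c$ is harmless to the required precision and the integrality $T_{pm}\equiv1\bmod p^{\,v_p(m)+1}=p^{\,v_p(k)}$ comes down to the pure Gamma congruence $\Gamma_p(a)\Gamma_p(pm+1)\equiv-\Gamma_p(a+pm)\bmod p^{\,v_p(m)+1}$ (raised to the $s$-th power, with the sign $(-1)^s$). \textbf{This congruence, sharp in the exact power of $p$, is the main obstacle}: one must extract from the local expansion of $\Gamma_p$ the precise order of vanishing of $1-T_{pm}$ at $m\equiv0$, matching $v_p(pm)=1+v_p(m)$, and the analogous first-order term is exactly what forces the continuous extension to take the value $B_0$ at $k=0$ (the $\psi_p=\Gamma_p^{\prime}/\Gamma_p$ and $\log c$ contributions entering through the derivatives of $T_{pm}$ and of $c^m$ at $m=0$). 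The continuity of $\frac{1-T_{pm}}{pm}$ in $m$ then follows from the same local analyticity, the division by $p$ being absorbed by the gain of one power of $p$ just described; one should treat $p=2$ (where $q=4$) separately, as there one expects to lose a power of $2$, in parallel with Theorem \ref{cong_rel_1}.

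The statement for $\widehat{B}_k$ is proved in the same way. Here $\widehat{B}_k=\frac{1}{k+a}\big(A_k-(-1)^{se}A^{(1)}_{(k-l)/p}c^{-(k+a)/p}\big)$, so putting $\widehat{T}_k:=(-1)^{se}c^{-(k+a)/p}A^{(1)}_{(k-l)/p}/A_k$ gives $\frac{\widehat{B}_k}{A_k}=\frac{1-\widehat{T}_k}{k+a}$, with $v_p(k+a)$ now playing the role of $v_p(k)$. The correction term is nonzero exactly when $k\equiv l\bmod p$, i.e. when $p\mid(k+a)$ (since $a+l=pa'$ gives $k+a=p(a'+m)$ for $k=l+pm$), so the pole of $\frac1{k+a}$ is matched just as before. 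Grouping the factors of $(a)_{l+pm}$ and $(l+pm)!$ and passing to $\Gamma_p$ again expresses $\widehat{T}_k$ as $(-1)^{se}c^{-(a'+m)}$ times a unit; the inverse twist $c^{-1}$ of $\widehat{\sigma}$ and the sign $(-1)^{se}$ take the places of $c$ and $(-1)^{s}$, and the whole problem reduces to the same sharp $\Gamma_p$-congruence (now modulo $p^{\,v_p(k+a)}$), which is again the crux.
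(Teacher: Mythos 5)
The paper gives no argument of its own here: the ``proof'' of Theorem \ref{congBA} is a citation to \cite[Lemma 3.9]{A} and \cite[Lemma 3.5]{W}, whose proofs work directly with the truncated Pochhammer products $\{a\}_x$ and Dwork-style congruences. Your $\Gamma_p$-reformulation is algebraically correct: since $\{a\}_x=(-1)^x\Gamma_p(a+x)/\Gamma_p(a)$ and $\Gamma_p(1)=-1$, one has $\{1\}_x/\{a\}_x=-\Gamma_p(a)\Gamma_p(x+1)/\Gamma_p(a+x)$, which combined with $A^{(1)}_{x/p}/A_x=(\{1\}_x/\{a\}_x)^s$ gives exactly your expression for $T_{pm}$. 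But you have misplaced the difficulty. The congruence you single out as ``the main obstacle'' is in fact the easy part: it is the specialisation $y=0$ of the known congruence $\{1\}_x/\{a\}_x\equiv\{1\}_y/\{a\}_y\bmod p^{v_p(x-y)}$ (for odd $p$ it also drops out of the $1$-Lipschitz property of $\Gamma_p$, applied to $\Gamma_p(a+pm)\equiv\Gamma_p(a)$ and $\Gamma_p(pm+1)\equiv\Gamma_p(1)=-1$ modulo $p^{1+v_p(m)}$). That settles the integrality claim, but it is not where the theorem lives.

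The genuine gap is in the second assertion. Pointwise integrality of $(1-T_k)/k$ together with continuity (or local analyticity, or even $1$-Lipschitz continuity) of $k\mapsto T_k$ does \emph{not} imply $\frac{B_k}{A_k}\equiv\frac{B_{k'}}{A_{k'}}\bmod p^n$ for $k\equiv k'\bmod p^n$. For $p\mid k$ with $v:=v_p(k)=v_p(k')>0$ one has
\[
\frac{1-T_k}{k}-\frac{1-T_{k'}}{k'}=\frac{(k'-k)(1-T_k)-k\,(T_k-T_{k'})}{k\,k'},
\]
and using only $v_p(1-T_k)\ge v$ and $v_p(T_k-T_{k'})\ge n$ the numerator has valuation $\ge n+v$ while the denominator has valuation $2v$, so you obtain the difference only modulo $p^{\,n-v}$. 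The ``gain of one power of $p$'' you invoke is a pointwise gain at each $k$; it does not transfer to the difference of two values, so the phrase ``the continuity of $(1-T_{pm})/(pm)$ then follows from the same local analyticity'' is exactly the unproved claim. What is needed is a second-order input --- e.g.\ $v_p(T_k-T_{k'})\ge n+v$, or that $\log\bigl(-\Gamma_p(a)\Gamma_p(x+1)/\Gamma_p(a+x)\bigr)$ is divisible by $x$ in the Iwasawa algebra with integral divided differences --- and establishing that is essentially the whole content of \cite[Lemma 3.9]{A}. Finally, your remark that at $p=2$ ``one expects to lose a power of $2$'' contradicts the statement being proved, which asserts the congruence modulo $p^n$ with no exception at $p=2$; if your method genuinely loses a factor of $2$ there, it does not prove the theorem as stated.
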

\begin{proof}
This is \cite[Lemma 3.9]{A} and \cite[Lemma 3.5]{W}. Although, in \cite[Lemma 3.9]{A}, the author only treated the case $c=1,$ it can be shown that (as in \cite[Lemma 3.7]{W}) that it is true for general $c\in 1+pW.$ 
\end{proof}

Let us define two functions $f:\mathbb{Z}_{>0}\rightarrow W$ and $\widehat{f}:\mathbb{Z}_{>0}\rightarrow W$ by 
$$
f(k)=\frac{B_k}{A_k}, \quad \widehat{f}(k)=\frac{\widehat{B}_k}{A_k}.
$$

In order to understand the transformation formulas between $p$-adic hypergeometric functions, we must further clarify the relationship between $f(k)$ and  
$\widehat{f}(k)$. By Theorem \ref{congBA} and using $p$-adic interpolation, we can extend functions $f$ and $\widehat{f}$ from $\mathbb{Z}_{>0}$ to $\mathbb{Z}_p$ denoted by $\beta$ and $\widehat{\beta},$ respectively. We write the value of $\beta$(resp. $\widehat{\beta}$) at $\lambda\in \mathbb{Z}_p$ by $\beta_\lambda$(resp. $\widehat{\beta}_{\lambda}$). \medskip

In fact, although it might not be apparent over the integers, we can find a close relationship between these two functions when viewing them over the $p$-adic numbers. The following lemma is one of the central results of this paper, which provides a crucial relation between $\beta$ and $\widehat{\beta}.$

\begin{lem}\label{beta+hbeta} Let $\lambda \in \mathbb{Z}_p$ and $a\in \mathbb{Z}_p\backslash \mathbb{Z}_{\leq 0},$ then we have
$$
\beta_{\lambda}+\widehat{\beta}_{-\lambda-a}=0.
$$
\end{lem}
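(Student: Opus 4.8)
The plan is to reduce the stated identity, by continuity, to a single reflection formula for the interpolated ratios of hypergeometric coefficients, and then to establish that reflection formula through Morita's $p$-adic Gamma function. The first step is purely formal. I would introduce the two auxiliary functions
$$
\tau_\lambda := 1-\lambda\beta_\lambda, \qquad \widehat{\tau}_\mu := 1-(\mu+a)\widehat{\beta}_\mu
$$
on $\mathbb{Z}_p$; both are continuous, since $\beta$ and $\widehat{\beta}$ are continuous by Theorem \ref{congBA} and multiplication by the polynomials $\lambda$ and $\mu+a$ preserves continuity. From $\widehat{\beta}_\mu=(1-\widehat{\tau}_\mu)/(\mu+a)$ we get, at $\mu=-\lambda-a$ (so that $\mu+a=-\lambda$), the clean relation
$$
\beta_\lambda+\widehat{\beta}_{-\lambda-a}=\lambda^{-1}\bigl(\widehat{\tau}_{-\lambda-a}-\tau_\lambda\bigr)\qquad(\lambda\neq 0).
$$
Hence it suffices to prove $\tau_\lambda=\widehat{\tau}_{-\lambda-a}$ for $\lambda\in\mathbb{Z}_p\setminus\{0\}$; the vanishing of the sum at $\lambda=0$ then follows by continuity.

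Next I would evaluate $\tau$ and $\widehat{\tau}$ on the dense set of positive integers using the explicit formulas for $B_k$ and $\widehat{B}_k$ from \S\ref{definition}. A direct computation gives $\tau_k=c^{k/p}A^{(1)}_{k/p}/A_k$, which vanishes unless $p\mid k$, and $\widehat{\tau}_m=(-1)^{se}c^{-(m+a)/p}A^{(1)}_{(m-l)/p}/A_m$, which vanishes unless $m\equiv l\pmod p$. Since $a+l\equiv 0\pmod p$, the point $-\lambda-a$ lies in $p\mathbb{Z}_p$ exactly when $\lambda$ does; thus for $\lambda\in\mathbb{Z}_p^\times$ both $\tau_\lambda$ and $\widehat{\tau}_{-\lambda-a}$ vanish and the identity is immediate. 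For $\lambda\in p\mathbb{Z}_p$ I would approximate $\lambda$ by integers $k\to\lambda$ with $p\mid k$ and $-\lambda-a$ by integers $m\to-\lambda-a$ with $m\equiv l\pmod p$. Then $k/p$ and $-(m+a)/p$ both tend to $\lambda/p\in\mathbb{Z}_p$, so the factors $c^{k/p}$ and $c^{-(m+a)/p}$ interpolate to the common unit $c^{\lambda/p}$ and cancel, leaving the purely combinatorial claim
$$
\lim_{k\to\lambda}\frac{A^{(1)}_{k/p}}{A_k}=(-1)^{se}\lim_{m\to-\lambda-a}\frac{A^{(1)}_{(m-l)/p}}{A_m}.
$$

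Finally I would prove this reflection. Writing $A_k=((a)_k/k!)^s$ and $A^{(1)}_j=((a')_j/j!)^s$, the whole statement is the $s$-th power of a single-factor identity, and the sign $(-1)^{se}=((-1)^e)^s$ is likewise the $s$-th power of a per-factor sign, so I may reduce to $s=1$. Expressing $(a)_k/k!$ through $\Gamma_p$ together with the explicit powers of $p$ and unit corrections coming from the factors divisible by $p$, each side becomes a ratio of $\Gamma_p$-values that interpolates $p$-adic analytically in the index. The substitution $m\mapsto-\lambda-a$, which sends $m+a$ to $-\lambda$, converts the $\Gamma_p$-arguments on the $\lambda$-side into their reflections of shape $1-x$, so that the reflection formula $\Gamma_p(x)\Gamma_p(1-x)=(-1)^{a_0(x)}$ (with $a_0(x)$ the leading $p$-adic digit) applies and matches the two interpolations up to a sign. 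I expect the main obstacle to be exactly this sign- and valuation-bookkeeping: one must verify that the accumulated signs from the reflection formula, together with the matching of the $p$-adic valuations of $A_k$ and $A_m$ under $m\approx -k-a$, combine to precisely $(-1)^{se}$, the Dwork-prime datum entering through $e=l'-\lfloor l'/p\rfloor$, which governs the residue that the $\Gamma_p$-reflection sign records. The analytic interpolation underlying the two limits is itself guaranteed by Theorem \ref{congBA}, so the substantive content is entirely in this reflection-sign computation.
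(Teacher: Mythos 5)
Your reduction is sound and, after unwinding the notation, is essentially the same reduction the paper makes: writing $\tau_k=c^{k/p}A^{(1)}_{k/p}/A_k$ and $\widehat{\tau}_m=(-1)^{se}c^{-(m+a)/p}A^{(1)}_{(m-l)/p}/A_m$, observing that the unit case is trivial and that the $c$-powers interpolate to a common unit, and thereby reducing everything to the reflection congruence $A^{(1)}_{k/p}/A_k \equiv (-1)^{se}\,A^{(1)}_{(m-l)/p}/A_m$ for $k+m+a\equiv 0 \bmod p^{N}$ with $N$ large. (The paper does the same thing without introducing $\tau$: it picks integer representatives $x,y$ with $x+y+a\equiv 0 \bmod p^{2n}$, uses \cite[Lemma 3.6]{A} to rewrite both ratios as $(\{1\}_x/\{a\}_x)^s$ and $(\{1\}_y/\{a\}_y)^s$, and keeps careful track of the precision lost when dividing by $y+a$ — a bookkeeping issue your exact-identity formulation $\tau_\lambda=\widehat{\tau}_{-\lambda-a}$ sidesteps cleanly, which is a genuine small advantage of your setup.)

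The gap is in the final step. The entire substance of the lemma is the reflection congruence with its precise sign, and your proposal does not prove it: you name the tool (Morita's reflection formula $\Gamma_p(x)\Gamma_p(1-x)=(-1)^{a_0(x)}$), and then explicitly defer "the sign- and valuation-bookkeeping" needed to show the accumulated signs equal $(-1)^{se}$, calling it "the main obstacle" and "the substantive content." That is exactly right — it is the substantive content, and it is what the paper's Lemma \ref{for_beta_hbeta} supplies, by an elementary induction on the truncated products $\{\alpha\}_n$ (pairing $(1+z+\ell_x)\cdots$ against $(-z-\ell_y)\cdots$ and extracting the sign $(-1)^{\ell_y-\ell_x}$, with a separate treatment of $p=2$ where $q=4$ rather than $p$ governs the residue $\ell'$ entering $e=\ell'-\lfloor \ell'/p\rfloor$). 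The $\Gamma_p$ route can certainly be made to work — the ratio $\{1\}_x/\{a\}_x$ is a ratio of $\Gamma_p$-values up to explicit signs, and the reflection formula is morally equivalent to the paper's combinatorial identity — but until the sign $(-1)^{e}$ per factor is actually derived (including the $p=2$ case, where the naive leading-digit count does not match $e$ without the $q=4$ correction), the proof is not complete. Also note that to get the exact equality $\tau_\lambda=\widehat{\tau}_{-\lambda-a}$ you need the congruence on integers to hold modulo $p^{N}$ whenever $k+m+a\equiv 0 \bmod p^{N}$, i.e.\ a quantitative statement of the strength of Lemma \ref{for_beta_hbeta}, not merely the qualitative existence of the two limits guaranteed by Theorem \ref{congBA}.
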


Before giving the proof of Lemma \ref{beta+hbeta}, we need to introduce some notation and one more lemma.
\begin{defi}
For $\alpha\in\mathbb{Z}_p$ and $n\in\mathbb{Z}_{\geq 1},$ let us define 
$$
\{\alpha\}_n=\prod_{\substack{1\leq i\leq n \\ p\nmid (\alpha+i-1)}}(\alpha+i-1)
$$ and 
$\{\alpha\}_0=1.$\medskip
\end{defi}

\begin{lem}\label{for_beta_hbeta}
Let $a\in \mathbb{Z}_p\backslash \mathbb{Z}_{\leq 0}$ and let $x, y\in\mathbb{Z}_{\geq 0}$ satisfying $x+y+a\equiv 0 \mod p^{n}.$ Then
$$
(-1)^{f_x}\frac{\{1\}_x}{\{a\}_x}\equiv (-1)^{f_y}\frac{\{1\}_y}{\{a\}_y} \mod p^{n}.
$$ where $f_x=\ell_x-\lfloor \ell_x/p \rfloor,$ $f_y=\ell_y-\lfloor \ell_y/p \rfloor$ and $\ell_x, \ell_y\in \{0,1,\cdots, q-1\}$ such that $x-\ell_x\equiv y-\ell_y\equiv 0 \mod q.$
\end{lem}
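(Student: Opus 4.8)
The plan is to reduce the asserted congruence to an elementary parity identity by rewriting the denominator $\{a\}_x$ modulo $p^n$ using the hypothesis $x+y+a\equiv 0 \pmod{p^n}$. First I observe that every factor occurring in $\{1\}_m$ and in $\{a\}_m$ is prime to $p$, so these products, and all the ratios below, are units; hence a congruence between two such ratios may be tested after multiplying both sides by any common unit. Writing $H(x):=(-1)^{f_x}\{1\}_x/\{a\}_x$ and $H(y):=(-1)^{f_y}\{1\}_y/\{a\}_y$, the goal is $H(x)\equiv H(y)\pmod{p^n}$.

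The key step is to reindex $\{a\}_x=\prod_{1\le i\le x,\ p\nmid(a+i-1)}(a+i-1)$. Putting $k=x-i+1$ and using $a\equiv-(x+y)\pmod{p^n}$ gives $a+i-1\equiv-(y+k)\pmod{p^n}$, and since $n\ge 1$ the skipped factors also match ($p\mid(a+i-1)\iff p\mid(y+k)$). Thus, letting $N_x$ be the number of retained factors, $\{a\}_x\equiv(-1)^{N_x}\prod_{1\le k\le x,\ p\nmid(y+k)}(y+k)=(-1)^{N_x}\{1\}_{x+y}/\{1\}_y\pmod{p^n}$, and symmetrically $\{a\}_y\equiv(-1)^{N_y}\{1\}_{x+y}/\{1\}_x$. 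Substituting, both $H(x)$ and $H(y)$ become $(-1)^{f_\bullet+N_\bullet}$ times the \emph{same} unit $\{1\}_x\{1\}_y/\{1\}_{x+y}$. Cancelling this common unit, the whole statement collapses to the sign identity $(-1)^{f_x+N_x}=(-1)^{f_y+N_y}$, i.e. $f_x+N_x\equiv f_y+N_y\pmod 2$; note that the hypothesis on $x+y+a$ is used only here, to justify the rewriting of $\{a\}_x$, so no further congruence on $x,y$ is needed for the parity step.

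It then remains to verify this parity identity, which is where the main work—and the only place the distinction between $p$ odd and $p=2$ enters—lies. Counting multiples of $p$ gives $N_x=x-(\lfloor(x+y)/p\rfloor-\lfloor y/p\rfloor)$ and likewise for $N_y$, so $N_x-N_y\equiv(x-y)+\lfloor y/p\rfloor-\lfloor x/p\rfloor\pmod 2$. For odd $p$ one has $q=p$ and $f_x=x\bmod p=x-p\lfloor x/p\rfloor$, and a one-line reduction modulo $2$ shows $(f_x-f_y)+(N_x-N_y)\equiv 0$. For $p=2$ one has $q=4$ and $f_x=\ell_x-\lfloor\ell_x/2\rfloor$ with $\ell_x=x\bmod 4$; here the identity reduces, after absorbing the floor terms, to showing $f_x+\lceil x/2\rceil\equiv 0\pmod 2$ as a function of $x\bmod 4$, which is exactly the finite check that $[0,1,1,0]+[0,1,1,0]\equiv[0,0,0,0]$—precisely the cancellation that the correction term $\lfloor\ell'/p\rfloor$ in the definition of $f$ is tailored to produce. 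Together with the previous paragraph this gives $H(x)\equiv H(y)\pmod{p^n}$.

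I expect the delicate step to be this $p=2$ sign bookkeeping: reconciling the $q=4$ definition of $f_x$ with the count $N_x$ of factors prime to $p$, and keeping the two reindexings (for $\{a\}_x$ and for $\{a\}_y$) consistent so that the leftover unit $\{1\}_x\{1\}_y/\{1\}_{x+y}$ really is common to both sides; once that is arranged, the argument is purely combinatorial and avoids any appeal to continuity estimates for the $p$-adic Gamma function.
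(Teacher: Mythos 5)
Your proof is correct, and it takes a genuinely different route from the paper's. The paper proves the lemma by an induction: it first establishes two base configurations where $x$ and $y$ sit in the same (or adjacent) residue blocks $z+\ell_x$, $z+\ell_y$ (resp.\ $z+p+\ell_y$), computes the ratio of the two sides directly in those cases, and then propagates via the shift relation $\{\alpha\}_{m}\mapsto\{\alpha\}_{m\pm q}$ (equation (3.2) in the text) to reach general $x,y$ with $x+y+a\equiv 0\pmod{p^n}$; the case $p=2$ is handled by repeating the whole scheme with $q=4$. You instead reverse the product $\{a\}_x$ in one stroke via $a+i-1\equiv-(y+k)\pmod{p^n}$, which exhibits both sides as the \emph{same} unit $\{1\}_x\{1\}_y/\{1\}_{x+y}$ up to the sign $(-1)^{f_\bullet+N_\bullet}$, and reduces the lemma to the unconditional parity identities $2(x-y)-(p+1)(\lfloor x/p\rfloor-\lfloor y/p\rfloor)\equiv 0\pmod 2$ for odd $p$ and $f_x+\lceil x/2\rceil\equiv 0\pmod 2$ for $p=2$; I checked both, and they hold (the $[0,1,1,0]$ tables for $f_x$ and $\lceil x/2\rceil$ as functions of $x\bmod 4$ do coincide). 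Your argument is shorter and more uniform, cleanly isolates the single place where the hypothesis $x+y+a\equiv 0\pmod{p^n}$ enters, and replaces the paper's case analysis and the somewhat loosely justified reduction of ``general $x,y$'' to the base cases by a closed-form sign computation; the paper's inductive formulation, on the other hand, is the one that meshes directly with the way the lemma is invoked (via shifts by $q$) elsewhere in \cite{A} and \cite{W}. Either proof suffices for the application in Lemma \ref{beta+hbeta}.
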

\begin{proof}
We use $p$-adic Gamma functions and their functional equations to prove this lemma. \medskip

Observe that for any $\alpha\in \mathbb{Z}_p,$ one has 
$$
\{\alpha \}_n=(-1)^n\frac{\Gamma_p(\alpha+n)}{\Gamma_p(\alpha)}.
$$ After rewriting this lemma in term of $\Gamma_p$, we find it follows from the following fact.
\begin{prop}[\cite{Sch}, Proposition 37.2]
Let $x\in \mathbb{Z}_p.$ If $p\neq 2$ then
$$
\Gamma_p (x)\Gamma_p(1-x)=(-1)^{R(x)}
$$ where $R(x)\in \{1,\cdots,p\}$ and $R(x)\equiv x \mod p.$ \medskip

For $p=2,$ we have
$$
\Gamma_2(x)\Gamma_2(1-x)=(-1)^{\sigma_1(x)+1}
$$ where $\sigma_1$ is defined by the formula
$$
\sigma_1(\sum_{j=0}^\infty a_j2^j)=a_1.
$$
\end{prop}

\end{proof}

Now we can prove Lemma \ref{beta+hbeta}.
\begin{proof}[(proof of Lemma \ref{beta+hbeta})]
It suffices to prove 
$$
\beta_{\lambda}+\widehat{\beta}_{-\lambda-a}\equiv 0 \mod p^n
$$ for all $n\in\mathbb{Z}_{\geq 0}.$ Since $\beta_{\lambda}\equiv \beta_{\lambda^\prime}$ and $\widehat{\beta}_{\lambda}\equiv \widehat{\beta}_{\lambda^\prime} \mod p^n$ if $\lambda\equiv \lambda^\prime \mod p^n$, we can pick $b\in\mathbb{Z}_{>0}$ such that $b\equiv \lambda \mod p^n$ and consider $
\beta_{b}+\widehat{\beta}_{-b-a}$ instead. \medskip

If $p\nmid b$, then $\beta_b=1/b$ and $\widehat{\beta}_{-b-a}=-1/b.$ Hence, $\beta_b+\widehat{\beta}_{-b-a}=0.$ Now we suppose $p|b.$ Pick $x,y \in\mathbb{Z}_{>0}$ and $0< x, y\leq p^{2n}$ such that $x\equiv b \mod p^n$ but $p^{n+1} \nmid x$ and $x+y+a\equiv 0 \mod p^{2n}.$ Then we claim 
$$
\beta_x+\widehat{\beta}_y\equiv 0 \mod p^n.
$$

By \cite[Lemma 3.6]{A}, we have
$$
\frac{A^{(1)}_{x/p}}{A_x}=\bigg(\frac{\{1\}_x}{\{a\}_x}\bigg)^s, \quad \frac{A^{(1)}_{\frac{y-l}{p}}}{A_y}=\bigg(\frac{\{1\}_y}{\{a\}_y}\bigg)^s.
$$

Therefore $\beta_x+\widehat{\beta}_y$ becomes
\begin{equation}
\frac{1-c^{x/p}(\frac{\{1\}_x}{\{a\}_x})^s}{x}+\frac{1-(-1)^{se}c^{-\frac{y+a}{p}}(\frac{\{1\}_y}{\{a\}_y})^s}{y+a}. 
\end{equation}

Since $x+y+a\equiv 0 \mod p^{2n},$ we write $x+y+a=zp^{2n}$ for some $z\in \mathbb{Z}_p.$ Since $p^{n+1}\nmid x$, we have ${\rm ord}_p(y+a)={\rm ord}_p(-x+zp^{2n})\leq n.$ Therefore after modulo $p^n,$ we have
\begin{equation}\label{beta mod}
\begin{aligned}
&\frac{1-c^{x/p}(\frac{\{1\}_x}{\{a\}_x})^s}{x}+\frac{1-(-1)^{se}c^{-\frac{y+a}{p}}(\frac{\{1\}_y}{\{a\}_y})^s}{y+a} \\
\equiv &\frac{1-c^{x/p}(\frac{\{1\}_x}{\{a\}_x})^s}{x}+\frac{1-(-1)^{se}c^{-\frac{y+a}{p}}(\frac{\{1\}_y}{\{a\}_y})^s}{-x} \mod{p^n} \\
= &\frac{-c^{x/p}(\frac{\{1\}_x}{\{a\}_x})^s+c^{-\frac{y+a}{p}}((-1)^e\frac{\{1\}_y}{\{a\}_y})^s}{x}.
\end{aligned}
\end{equation}

Observe that $c^{-\frac{y+a}{p}}=c^{\frac{x}{p}-zp^{2n-1}}\equiv c^{x/p} \mod p^{2n}.$ Thus, in order to prove equation (\ref{beta mod}) is $0$ modulo $p^n$, it suffices to prove that
$$
\frac{\{1\}_x}{\{a\}_x}- (-1)^e\frac{\{1\}_y}{\{a\}_y}\equiv 0 \mod p^{2n}.
$$ This follows from Lemma \ref{for_beta_hbeta}. Therefore the result follows.
\end{proof}
\section{Transformation Formulas}
In this section, we will recall two conjectures of $p$-adic hypergeometric functions. The first one is ``Transformation formulas between $\mathscr{F}_{a}^{(\sigma)}(t)$ and $\widehat{\mathscr{F}}_{a}^{(\sigma)}(t)$'' and the second one is ``Transformation formulas of Dwork's $p$-adic hypergeometric functions $\mathscr{F}^{{\rm Dw}}_{a}(t)$''. Our main result in this paper is to prove that the second conjecture implies the first one.

\subsection{Involution of $W\langle t,t^{-1},h(t)^{-1}\rangle$}\label{involution}

In order to properly formulate and establish the transformation formulas, it is necessary to first define a suitable involution of $W\langle t,t^{-1},h(t)^{-1}\rangle$. In this subsection, we recall such an involution.\medskip

Let $a_1, \cdots, a_{r-1}\in \mathbb{Z}_p$ and put $h(t):=\prod_{i=0}^{r-1}F_{a_i}(t)_{<p},$ where $F_{a_i}(t)$ is hypergeometric power series.\medskip

We define 
$$
W\langle t, t^{-1}, h(t)^{-1} \rangle :=\underset{n}{\varprojlim}(W/p^n[t,t^{-1},h(t)^{-1}]).
$$
Then there is an involution
$$
\omega : W\langle t,t^{-1},h(t)^{-1}\rangle \longrightarrow W\langle t,t^{-1},h(t)^{-1}\rangle, \quad \omega(f(t))=f(t^{-1}).
$$
This follows from the following proposition in \cite[Proposition 4.11]{W}.
\begin{prop}\label{involution}
$(1)$ Let $a\in \mathbb{Z}_p$ and $F(t):=F_{a}(t)_{<p} \mod p.$ Then \textcolor{black}{$F(t)=(-1)^{ls} t^lF(t^{-1})$} in $\mathbb{F}_p[t]$ where \textcolor{black}{$l$} is the degree of $F(t)$ which equals the unique integer in $\{0,1,\cdots,p-1\}$ such that \textcolor{black}{$a+l\equiv 0 \mod p.$} \medskip

$(2)$ Let $a_i\in \mathbb{Z}_p(0\leq i \leq r-1)$ and put $h(t):=\prod_{i=0}^{r-1}F_{a_i}(t)_{<p}.$ Then there is a ring homomorphism 
$$
\omega_n: W/p^n[t,t^{-1},h(t)^{-1}]\rightarrow W/p^n[t,t^{-1},h(t)^{-1}], \quad f(t)\mapsto f(t^{-1}).
$$\medskip
$(3)$ There is an involution
$$
\omega : W\langle t,t^{-1},h(t)^{-1}\rangle \longrightarrow W\langle t,t^{-1},h(t)^{-1}\rangle, \quad \omega(f(t))=f(t^{-1}).
$$ 
\end{prop}

\subsection{Transformation Formula between $\mathscr{F}_{a}^{\;(\sigma)}(t)$ and $\widehat{\mathscr{F}}_{a}^{\;(\widehat{\sigma})}(t)$}

Using the involution $\omega$ defined in Section \ref{involution}, we can describe the transformation formulas that relates the object $\mathscr{F}_{a}^{\;(\sigma)}(t)$ to its counterpart $\widehat{\mathscr{F}}_{a}^{\;(\widehat{\sigma})}(t)$ under the inversion symmetry $t \mapsto t^{-1}$. Specifically, the following conjecture from \cite{W} predicts the algebraic relation between these two entities.
\begin{conj}[{\cite[Conjecture 4.12]{W}}, \textbf{Transformation Formula between $\mathscr{F}_{a}^{\;(\sigma)}(t)$ and $\widehat{\mathscr{F}}_{a}^{\;(\widehat{\sigma})}(t)$}] 
\label{transformation-conj}
Let $\sigma(t)=ct^p$ and $\widehat{\sigma}(t)=c^{-1}t^p$. Let $a\in\mathbb{Z}_p\backslash \mathbb{Z}_{\leq 0}$ and the $r$th Dwork prime $a^{(r)}=a$ for some $r>0.$ Put $h(t):=\prod_{i=0}^{r-1}F_{a^{(i)}}(t)_{<p},$ then
$$
\mathscr{F}_{a}^{\;(\sigma)}(t)=-\widehat{\mathscr{F}}_{a}^{\;(\widehat{\sigma})}(t^{-1})
$$
in the ring $W\langle t,t^{-1},h(t)^{-1}\rangle$ 
where $\widehat{\mathscr{F}}_{a}^{\;(\widehat{\sigma})}(t^{-1})$ is defined as $\omega(\widehat{\mathscr{F}}_{a}^{\;(\widehat{\sigma})}(t)).$ The map
$$
\omega: W\langle t,t^{-1},h(t)^{-1}\rangle \rightarrow W\langle t,t^{-1},h(t)^{-1}\rangle 
$$ is defined in Section \ref{involution}.
\end{conj}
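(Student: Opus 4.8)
The plan is to deduce the functional equation from the transformation formula for Dwork's functions, reducing everything to congruences modulo $p^n$ and then letting $n\to\infty$. By Theorem \ref{cong_rel_1} we have $\mathscr{F}_a^{(\sigma)}(t)\equiv G_a^{(\sigma)}(t)_{<p^n}/F_a(t)_{<p^n}\bmod p^n$, and by Theorem \ref{cong_rel_2} we have $\widehat{\mathscr{F}}_a^{(\widehat\sigma)}(t)\equiv \widehat{G}_a^{(\widehat\sigma)}(t)_{<p^n}/F_a(t)_{<p^n}\bmod p^n$. Writing the numerators through the interpolated coefficients, $G_a^{(\sigma)}(t)_{<p^n}=B_0^{(\sigma)}+\sum_{0<k<p^n}\beta_k A_k t^k$ and $\widehat{G}_a^{(\widehat\sigma)}(t)_{<p^n}=\sum_{0\le k<p^n}\widehat\beta_k A_k t^k$, reduces the identity to a relation among the sequences $A_k,\beta_k,\widehat\beta_k$, for which Lemma \ref{beta+hbeta} supplies the key input $\beta_\lambda=-\widehat\beta_{-\lambda-a}$.

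Next I would make the map $\omega$ of \cite[Proposition 4.11]{W} explicit on truncated polynomials. Substituting $t\mapsto t^{-1}$ and clearing the resulting pole by $t^{p^n-1}$ reverses the coefficient string, sending $F_a(t)_{<p^n}$ to $\sum_{0\le j<p^n}A_{p^n-1-j}t^j$; the inverted generator $h(t)^{-1}$ and the Dwork-prime periodicity $a^{(r)}=a$ are precisely what turn this reversal into a well-defined unit of $W\langle t,t^{-1},h(t)^{-1}\rangle$. The role of $h(t)=\prod_{i=0}^{r-1}F_{a^{(i)}}(t)_{<p}$ is that, over one full period of Dwork primes, the factors produced by each reversal telescope, so that the reversed denominator is again $F_a(t)_{<p^n}$ up to a monomial and a unit.

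The essential input is the Dwork transformation formula $\mathscr{F}^{\mathrm{Dw}}_{a,\dots,a}(t)=((-1)^s t)^l\,\mathscr{F}^{\mathrm{Dw}}_{a,\dots,a}(t^{-1})$ (Conjecture \ref{Dwork-trans-conj_1}), which I would combine with Dwork's congruence $\mathscr{F}^{\mathrm{Dw}}_{a,\dots,a}(t)\equiv F_a(t)_{<p^n}/F_{a'}(t^p)_{<p^n}\bmod p^n$ from \cite{Dw}. Together these yield a functional equation for the truncated series $F_a(t)_{<p^n}$ under the index reversal $A_k\mapsto A_{p^n-1-k}$, converting it into the shift $k\mapsto -k-a$ modulo $p^n$ that governs $\widehat\beta$. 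Feeding this into $\omega$ and cancelling the common factor $F_a(t)_{<p^n}$, the desired identity $\mathscr{F}_a^{(\sigma)}(t)+\widehat{\mathscr{F}}_a^{(\widehat\sigma)}(t^{-1})=0$ collapses to pairings of coefficients of the form $\beta_k+\widehat\beta_{-k-a}$, each of which vanishes by Lemma \ref{beta+hbeta}; taking $n\to\infty$ then gives the equality in the Tate algebra.

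I expect the main obstacle to lie in the second and third steps: making the map $\omega$ and the convergence in $W\langle t,t^{-1},h(t)^{-1}\rangle$ fully rigorous, and proving that the index reversal forced by the Dwork transformation formula is exactly the involution $\lambda\leftrightarrow-\lambda-a$ for which Lemma \ref{beta+hbeta} is designed. Genuine secondary difficulties are the bookkeeping of the constant term $B_0^{(\sigma)}$, which is the digamma-and-$\gamma_p$ contribution not captured by the interpolation of $\beta_k$ for $k\ge 1$, and the consistent tracking of the normalization sign $(-1)^{se}$ and of the exchange $c\leftrightarrow c^{-1}$ relating $\sigma$ and $\widehat\sigma$ throughout the computation.
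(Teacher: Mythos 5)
Two points. First, be clear about what can actually be proved: the statement is a conjecture, and your argument takes the Dwork transformation formula as an input. Since that formula is itself only conjectural (and, as Theorem \ref{Dwork-trans-thm} shows, false as stated for $p=2$ unless one allows the sign $\pm$ of Conjecture \ref{Dwork-trans-conj}), what your outline can deliver is exactly the paper's Theorem \ref{conj2_to_conj1}, namely the implication ``Dwork transformation formula $\Rightarrow$ Conjecture \ref{transformation-conj}'', plus the unconditional case $s=1$ via Theorem \ref{Dwork-trans-thm}. Within that scope your strategy is the same as the paper's: reduce to truncations via Theorems \ref{cong_rel_1} and \ref{cong_rel_2}, encode the numerators through $\beta$, $\widehat\beta$, use Lemma \ref{beta+hbeta} as the pairing $\beta_\lambda+\widehat\beta_{-\lambda-a}=0$, and extract from the Dwork formula combined with Dwork's congruence an index-reversal identity for the truncated $F_a(t)_{<p^n}$ (this is Lemma \ref{A-cong}).

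The genuine gap is in your final step, where you say the identity ``collapses to pairings of coefficients of the form $\beta_k+\widehat\beta_{-k-a}$, each of which vanishes.'' It does not collapse that way. After reducing to
$$
\sum_{\substack{i+j=m}}\beta_iA_iA_{p^n-j-1}+\widehat\beta_{p^n-j-1}A_iA_{p^n-j-1}\equiv 0 \mod p^n,
$$
the indices $i$ and $p^n-j-1$ attached to $\beta$ and $\widehat\beta$ in a single term are not related by $\lambda\mapsto-\lambda-a$, so Lemma \ref{beta+hbeta} cannot be applied termwise. The paper instead groups terms by residue classes $i\equiv k$, respectively $p^n-j'-1\equiv -k-a$, and then needs the full strength of Lemma \ref{A-cong}: for each depth $0\le d\le n$, the difference of the two $A$-sums taken over classes mod $p^{n-d}$ vanishes mod $p^{d+1}$. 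This feeds a descending induction (group mod $p^n$, gain one factor of $p$, coarsen to mod $p^{n-1}$ using $\beta_k\equiv\beta_{k'}$ for $k\equiv k'$, gain another factor, and so on down to mod $p$), which is the actual engine of the proof and is absent from your outline beyond being flagged as an ``expected obstacle.'' You would also need the separate symmetry argument for $p^n-1\le m\le 2(p^n-1)$ (substituting $i'=p^n-i-1$, $j'=p^n-j-1$ and exchanging the roles of $B$ and $\widehat B$), and the observation that the $\pm$ ambiguities in Conjecture \ref{Dwork-trans-conj} cancel when one divides the twisted equation by the untwisted one in the proof of Lemma \ref{A-cong}.
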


\begin{remark}
  This conjecture is true modulo $p$ (cf. \cite[Theorem 4.13]{W}). 
\end{remark}

The conjecture under certain conditions was already established in \cite{W}.
\begin{thm}[\cite{W}, Theorem 4.14]
The conjecture is true for $s=2, a\in \frac{1}{N}\mathbb{Z}, 0<a<1$ and $p>N.$ 
\end{thm}


\subsection{Transformation formula of Dwork's $p$-adic Hypergeometric Functions}\medskip
We now introduce the conjecture of transformation formulas of Dwork's $p$-adic hypergeometric functions. This conjecture was first proposed in \cite[Conjecture 4.18]{W}. However, as we will point out, the original formulation in \cite[Conjecture 4.18]{W} contains an  error(see Example \ref{Dwork-trans-thm}, and Theorem \ref{dw_trans_ncase}). To rectify this, we present a corrected version of the conjecture in this paper, and it is this revised formulation that we will investigate and build upon.

\begin{conj}[\textbf{Transformation formula on Dwork's $p$-adic Hypergeometric Functions}]\label{Dwork-trans-conj}
Let $a\in \Bbb{Z}_p$ and $l$ is the unique integer in $\{0,1,\cdots,p-1\}$ such that $a+l\equiv 0 \mod p.$ Then for odd prime $p,$ we have
$$
\mathscr{F}^{{\rm Dw}}_{a}(t)=((-1)^st)^{l}\mathscr{F}^{{\rm Dw}}_{a}(t^{-1}).
$$
For $p=2,$ we have
$$
\mathscr{F}^{{\rm Dw}}_{a}(t)=\pm((-1)^st)^{l}\mathscr{F}^{{\rm Dw}}_{a}(t^{-1}).
$$
\end{conj}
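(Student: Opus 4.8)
The plan is to imitate the proof of Theorem \ref{Dwork-trans-thm} and reduce the identity to a congruence modulo $p^n$ for every $n$, using Dwork's congruence relation \cite[p.37, Theorem 2]{Dw}
$$
\mathscr{F}^{\rm Dw}_a(t)\equiv \frac{F_a(t)_{<p^n}}{F_{a'}(t^p)_{<p^n}}\mod p^n.
$$
Writing $P(t):=F_a(t)_{<p^n}=\sum_{k=0}^{p^n-1}A_kt^k$ and $Q(t):=F_{a'}(t^p)_{<p^n}=\sum_{j=0}^{p^{n-1}-1}A^{(1)}_jt^{pj}$, and interpreting $\mathscr{F}^{\rm Dw}_a(t^{-1})$ through the appropriate inversion procedure (the analogue for Dwork's function of the substitution carried out in Theorem \ref{Dwork-trans-thm}), the target identity becomes, after clearing denominators, a reciprocity congruence
$$
P(t)\,\widetilde{Q}(t)\equiv ((-1)^s)^{l}\,t^{m}\,\widetilde{P}(t)\,Q(t)\mod p^n,
$$
where $\widetilde{P}(t)=t^{p^n-1}P(t^{-1})$ and $\widetilde{Q}(t)=t^{p^n-p}Q(t^{-1})$ are the reversed polynomials and $m$ is an explicit integer depending only on $l$ and $p$. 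Once this holds for all $n$, letting $n\to\infty$ yields the conjecture in the completed Laurent ring.

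For $s=1$ the reciprocity is immediate from the closed form $F_a(t)=(1-t)^{-a}$ and the elementary reflection $(1-t^{-1})^{-N}=(-1)^Nt^N(1-t)^{-N}$, which is exactly what drives the proof of Theorem \ref{Dwork-trans-thm}. For $s\ge 2$ no such closed form exists, and the key idea is to replace this single functional equation by the coefficient-level reciprocity already available in this paper: by \cite[Lemma 3.6]{A} one has $A^{(1)}_{x/p}/A_x=(\{1\}_x/\{a\}_x)^s$ for $p\mid x$, and Lemma \ref{for_beta_hbeta} supplies, whenever $x+y+a\equiv 0\bmod p^n$, the reflection
$$
\frac{\{1\}_x}{\{a\}_x}\equiv (-1)^{f_y-f_x}\frac{\{1\}_y}{\{a\}_y}\mod p^n.
$$
Raising to the $s$-th power produces a congruence between the Dwork ratios $A^{(1)}_{x/p}/A_x$ at reflected indices $x$ and $y=-x-a$, which is the precise $s$-fold generalisation of the $(1-t)^{-N}$ reflection. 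I would then use these pairwise congruences to match, monomial by monomial, the two sides of the reciprocity above, pairing each index $k$ with its reflection. The overall sign and the power of $t$ are fixed by the Dwork-prime relation $pa'=a+l$: the relevant parity is that of $(p-1)(N+l)/p$, which is even for every odd $p$ (forcing sign $+1$) but can be odd for $p=2$ (producing the $\pm$ recorded in the statement).

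The hard part will be promoting these coefficient reflections to a genuine identity of the truncated rational functions. For $s\ge 2$ the reflection holds only for the Dwork ratios $A^{(1)}_{x/p}/A_x$, not for the numerator coefficients $A_k$ in isolation, so the denominator $F_{a'}(t^p)_{<p^n}$ must be carried along throughout and the cross terms in $P(t)\widetilde{Q}(t)$ and $\widetilde{P}(t)Q(t)$ controlled simultaneously. Making the substitution $t\mapsto t^{-1}$ rigorous (choosing the correct analogue of the map $\omega$ of \cite[\S 4.3, Proposition 4.11]{W} adapted to $\mathscr{F}^{\rm Dw}$) and, above all, controlling the truncation errors near the boundary $k\approx p^n$ uniformly in $n$ is exactly the step that the $s=1$ closed form renders trivial and that must be supplied in general. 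This is where I expect the genuine difficulty of the conjecture to lie, and it is the reason the statement is phrased as a conjecture rather than proved outright for all $s$.
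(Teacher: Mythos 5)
You should first note that the statement you were asked to prove is, in the paper itself, a \emph{conjecture}: the paper offers no proof of it for general $s$. What the paper does prove is the special case $s=1$ (Theorem \ref{Dwork-trans-thm}), via Dwork's congruence \cite[p.37, Theorem 2]{Dw} and the closed form $F_a(t)=(1-t)^{-a}$, together with the $\pm$ correction at $p=2$; and separately it proves that the conjecture \emph{implies} Conjecture \ref{transformation-conj} (Theorem \ref{conj2_to_conj1}). Your proposal is consistent with this state of affairs: the part of your argument that is complete (the $s=1$ reflection $(1-t^{-1})^{-N}=(-1)^N t^N(1-t)^{-N}$ inside the truncated Dwork quotients, with the exponent and sign fixed by $pa'=a+l$) is essentially the paper's proof of Theorem \ref{Dwork-trans-thm}, and the part you flag as hard is exactly the part nobody has done.

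The gap you identify is genuine, and it is worth naming precisely why your proposed tool cannot close it. Lemma \ref{for_beta_hbeta} combined with \cite[Lemma 3.6]{A} controls only the \emph{ratios} $A^{(1)}_{x/p}/A_x=(\{1\}_x/\{a\}_x)^s$ at indices reflected by $x+y+a\equiv 0 \bmod p^n$. But the reciprocity congruence you reduce to, $P(t)\widetilde{Q}(t)\equiv ((-1)^s)^l t^m\widetilde{P}(t)Q(t) \bmod p^n$, is \emph{bilinear} in the coefficients: its verification requires congruences for convolution sums $\sum_{i+j=m}A_iA^{(1)}_{j'}$, and a family of quotient identities at reflected indices does not determine such sums (this is precisely what the closed form supplies when $s=1$ and what is absent when $s\geq 2$). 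Note also that the paper's own logic runs in the opposite direction: Lemma \ref{A-cong} \emph{assumes} the conjecture and deduces coefficient-level congruences from it, which are then combined with $\beta_\lambda+\widehat{\beta}_{-\lambda-a}=0$ (Lemma \ref{beta+hbeta}) to get Conjecture \ref{transformation-conj}; there is no route in the paper from those lemmas back to the conjecture. Finally, your parity bookkeeping for the sign is slightly off as written (the relevant parity for odd $p$ is that of $N-N'=((p-1)N-l)/p$, which is the parity of $l$, absorbed into $(-t)^l$, rather than of $(p-1)(N+l)/p$), though your conclusion about where the $\pm$ at $p=2$ comes from is correct.
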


To illustrate the validity and concrete structure of this revised conjecture, we first present an explicit example for the case $s=1$.
\begin{eg2}\label{Dwork-trans-thm}
Let $p$ be an odd , $s=1$ and $a\in \mathbb{Z}_p,$ then we have
$$
\mathscr{F}^{{\rm Dw}}_{a}(t)=(-t)^{l}\mathscr{F}^{{\rm Dw}}_{a}(t^{-1}),
$$
where $l$ is the unique integer in $\{0,1,\cdots,p-1\}$ such that $a+l\equiv 0 \mod p.$\medskip

For $p=2$, we have
$$
\mathscr{F}^{{\rm Dw}}_{a}(t)=\pm(-t)^{l}\mathscr{F}^{{\rm Dw}}_{a}(t^{-1}).
$$
$(+ : a^\prime \equiv 0 \mod 2; -: a^\prime \equiv 1 \mod 2)$.
\end{eg2}
\begin{proof}
By \cite[p.37, Theorem 2]{Dw}, we have
$$
\mathscr{F}^{\rm Dw}_a(t)\equiv\frac{F_a(t)_{<p^n}}{F_{a^\prime}(t^p)_{<p^n}}= \frac{(1-t)^{-a}_{<p^n}}{(1-t^p)^{-a^\prime}_{<p^n}} \equiv \frac{(1-t)^{-N}_{<p^n}}{(1-t^p)^{-N^\prime}_{<p^n}} \mod p^n
$$ for some $N\in \mathbb{Z}_{>0}$ and $a\equiv N \mod p.$ \medskip

Then we obtain
$$
\begin{aligned}
\mathscr{F}^{\rm Dw}_a(t^{-1})&\equiv\left.\frac{(1-t)^{-N}_{<p^n}}{(1-t^p)^{-N^\prime}_{<p^n}}\right|_{t^{-1}} \mod p^n\\
&\equiv\left.\frac{(1-t)^{-N}}{(1-t^p)^{-N^\prime}}\right|_{t^{-1}} \\
&=\frac{(-t)^{-pN^\prime}}{(-t)^{-pN^\prime}}\cdot \frac{(1-t^{-1})^{-N}}{(1-t^{-p})^{-N^\prime}} \\
&=\frac{(-t)^{-l}(1-t)^{-N}}{(1-t^p)^{-N^\prime}}
\end{aligned}
$$ when $p$ is odd. Therefore the result follows. \medskip

For $p=2,$ using similar calculation, we have
$$
\begin{aligned}
\mathscr{F}^{\rm Dw}_a(t^{-1})&\equiv(-1)^{-N^\prime}\frac{(-t)^{-l}(1-t)^{-N}}{(1-t^2)^{-N^\prime}} \mod 2^n.
\end{aligned}
$$ Since $a^\prime \equiv N^\prime \mod 2,$ we obtain our result.
\end{proof}
\begin{remark}
    In the original statement proposed in \cite[Conjecture 4.18]{W}, it is conjectured that
    $$
\mathscr{F}^{{\rm Dw}}_{a}(t)=((-1)^st)^{l}\mathscr{F}^{{\rm Dw}}_{a}(t^{-1})
$$ without up to sign.
    However, it fails to hold (see Example \ref{Dwork-trans-thm}, and Theorem \ref{dw_trans_ncase}).
\end{remark}

The conjecture under certain conditions was previously established by the authors in \cite{W} and subsequently generalized by Nemoto \cite{N}. Specifically, we proved the following theorem.
\begin{thm}[\cite{W}, Theorem 4.20]\label{dw_trans_formula_wcase}
The conjecture is true for $s=2, a\in N^{-1}\mathbb{Z}, 0<a<1$ and $p>N.$ 
\end{thm}

Recently, Nemoto \cite{N} generalized Theorem \ref{dw_trans_formula_wcase} to arbitrary $s\geq 2$ and relaxed the condition.
\begin{thm}[\cite{N}, Theorem 1.2]\label{dw_trans_ncase}
Suppose that the following two conditions:
\begin{enumerate}
    \item  $a\in N^{-1}\Bbb{Z}$ and $p\nmid N$ for some $N\geq 2$
    \item $0<a<1$
\end{enumerate}

Then Conjecture \ref{Dwork-trans-conj} is true when $s\geq 2$. Furthermore, if $p=2$, $$
\mathscr{F}^{{\rm Dw}}_{a}(t)=-((-1)^st)^{l}\mathscr{F}^{{\rm Dw}}_{a}(t^{-1})
$$ holds if and only if $s$ is odd and $a^\prime \equiv 1 \pmod{2}$.
\end{thm}
\subsection{The proof of Conjecture \ref{Dwork-trans-conj} implies Conjecture \ref{transformation-conj}}
In this section, we prove that Conjecture \ref{Dwork-trans-conj} implies Conjecture \ref{transformation-conj}. This conditional reduction serves as the bridge that allows us to deduce new cases for Conjecture \ref{transformation-conj} by utilizing recent developments.\medskip

First, we prove the following lemma. 
\begin{lem}\label{A-cong}
Let $a\in \mathbb{Z}_p,$ $m, k, d\in\mathbb{Z}_{\geq 0}$ with $0\leq m\leq p^n-1$ and $0\leq d \leq n.$ If Conjecture \ref{Dwork-trans-conj} is true, then we have
\begin{multline}\label{eq.2}
\begin{split}
    \sum_{\substack{i \equiv k \bmod{p^{n-d}} \\ 0\leq i \leq m \\ i+j=m}}A_iA
_{p^n-j-1}-\sum_{\substack{p^n-j^\prime-1 \equiv -k-a \bmod p^{n-d}\\ 0\leq j^\prime \leq m \\ i^\prime+j^\prime=m}}A_{i^\prime}A_{p^n-j^\prime-1}\equiv 0 \mod p^{d+1}.
\end{split}
\end{multline}
\end{lem}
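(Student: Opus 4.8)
The plan is to read both sums as residue-restricted pieces of a single palindromic convolution and then to produce the required symmetry from Conjecture~\ref{Dwork-trans-conj}. Write $f_n(t):=F_a(t)_{<p^n}=\sum_{i=0}^{p^n-1}A_it^i$ and let $f_n^{\ast}(t):=t^{p^n-1}f_n(t^{-1})=\sum_{b}A_{p^n-1-b}t^b$ be its reciprocal polynomial. A direct check shows that the unrestricted sum $\sum_{i+j=m}A_iA_{p^n-1-j}$ is exactly the coefficient $[t^m]\bigl(f_n(t)f_n^{\ast}(t)\bigr)$. Reindexing each term by the second $A$-index $J:=p^n-1-j$ (so the first index is $i=J+\delta$ with $\delta:=m-p^n+1$), the constraint $i\equiv k$ of the first sum becomes $J\equiv k-m-1$, while the second sum is the piece with $J\equiv -k-a$, both modulo $p^{n-d}$. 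Hence the statement is equivalent to
$$\sum_{J\equiv k-m-1}A_{J+\delta}A_{J}\equiv\sum_{J\equiv -k-a}A_{J+\delta}A_{J}\pmod{p^{d+1}},$$
the two sums running over the admissible range of $J$; call them $S_1$ and $S_2$.

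To relate these two residue classes I would use the involution $\rho(J):=-J-a-m-1$ on $\mathbb{Z}/p^{n-d}$, which carries the class $k-m-1$ onto the class $-k-a$ and squares to the identity. The heart of the matter is to show that Conjecture~\ref{Dwork-trans-conj} forces the termwise reflection $A_{J+\delta}A_{J}\equiv A_{\rho(J)+\delta}A_{\rho(J)}$ to the required precision. I would obtain this by rewriting the conjecture as the rational identity $F_a(t)F_{a'}(t^{-p})=(-1)^{sl}t^{l}F_a(t^{-1})F_{a'}(t^p)$, clearing it against the truncations via the Dwork congruence used in Theorem~\ref{Dwork-trans-thm} (so that modulo $p^n$ only the $A_i$ with $0\le i\le p^n-1$, i.e. the coefficients of $f_n$ and of $g_n:=F_{a'}(t^p)_{<p^n}$, survive), and then comparing coefficients. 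Because $F_{a'}(t^{\pm p})$ and $g_n$ contribute only exponents divisible by $p$, this comparison splits cleanly along residue classes mod $p$, and iterating it mod $p^{n-d}$ yields the reflection; the shift of the residue of $J$ by $a$ rather than by $l$ is produced by combining the twist $t^{l}$ with the Dwork-prime relation $a'=(a+l)/p$, and the sign $(-1)^{sl}$ should cancel because $a\equiv -l \bmod p$ feeds through both factors.

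Summing the termwise reflection over the residue class, using that $\rho$ is a bijection between the two classes, then gives $S_1\equiv S_2$. The delicate quantitative point — and the step I expect to be the main obstacle — is the passage from the raw modulus $p^n$ of the transformation formula to the sharp modulus $p^{d+1}$ on the restricted sums. Here I would invoke Theorem~\ref{congBA} and Lemma~\ref{for_beta_hbeta}: on indices lying in a fixed class mod $p^{n-d}$ the relevant coefficient ratios (the $(\{1\}_x/\{a\}_x)^s$ of \cite[Lemma 3.6]{A}) stabilize mod $p^{n-d}$, which trades the coarseness $p^{n-d}$ of the class for precision $p^{d+1}$ on the sum. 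The remaining difficulty is genuinely at the boundary: lifted from residues to actual indices, $\rho$ need not respect the ranges $0\le i\le m$ and $0\le j'\le m$, so $S_1$ and $S_2$ are not term-for-term reflections, and one must check that each unmatched overhang term is individually divisible by $p^{d+1}$ — which should follow from the forced $p$-adic valuation of the corresponding $A_{J+\delta}$ or $A_{J}$. For $p=2$ the $\pm$ ambiguity in Conjecture~\ref{Dwork-trans-conj} must additionally be carried through the same computation.
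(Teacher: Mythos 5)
Your reindexing of the two sums as $S_1=\sum_{J\equiv k-m-1}A_{J+\delta}A_J$ and $S_2=\sum_{J\equiv -k-a}A_{J+\delta}A_J$ is correct, and the involution $\rho(J)=-J-a-m-1$ does carry one residue class onto the other. But the step you place at the heart of the argument --- the termwise congruence $A_{J+\delta}A_J\equiv A_{\rho(J)+\delta}A_{\rho(J)}\pmod{p^{d+1}}$ --- is false, not merely delicate at the boundary. Already for $s=1$, $a=2$, $m=p^n-1$ the terms are $A_J^2=(J+1)^2$; two lifts of the same class mod $p^{n-d}$ differ by a multiple of $p^{n-d}$, so their squares agree only mod $p^{n-d}$, which is weaker than $p^{d+1}$ once $d\geq n-d$. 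Only the \emph{sums} over the classes satisfy the asserted congruence. Nothing extracted from Conjecture \ref{Dwork-trans-conj} can give a termwise identity in any case: comparing coefficients in a product identity yields convolution (sum) identities, and that is all that is true here. Moreover, a single application of the conjecture only separates residues mod $p$ (the twist is $t^{l}$ with $0\le l\le p-1$); to reach classes mod $p^{n-d}$, and to see why the second class is $-k-a$, one must telescope the conjecture through the Dwork primes $a,a',\dots,a^{(n-d-1)}$, so that the accumulated twist $t^{l_0+l_1p+\cdots+l_{n-d-1}p^{n-d-1}}$ has exponent congruent to $-a$ mod $p^{n-d}$; your sketch never identifies this mechanism. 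Finally, the two points you defer --- the source of the precision $p^{d+1}$ and the unmatched boundary terms --- are exactly where the approach breaks down: Theorem \ref{congBA} and Lemma \ref{for_beta_hbeta} concern the ratios $B_k/A_k$ and $\{1\}_x/\{a\}_x$, not the coefficients $A_i$ themselves, and give no control over individual products $A_{J+\delta}A_J$.

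The paper avoids all of this by staying at the level of power series. It forms the telescoping product $F(t)/F^{(n-d)}(t^{p^{n-d}})=\prod_i F^{(i)}(t^{p^i})/F^{(i+1)}(t^{p^{i+1}})$, applies the conjecture to each factor, controls the error mod $p^{d+1}$ via Dwork's congruence (the deepest factor, truncated at $p^n$, is only known to that precision), filters the residue class $k$ by averaging over $p^{n-d}$-th roots of unity --- which is precisely where the factor $\zeta^{ul'}$ with $l'\equiv-a\pmod{p^{n-d}}$ converts the class $k$ on one side into $-k-a$ on the other --- and then reads the lemma off as the coefficient of $t^m$ in a congruence of polynomials. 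No termwise identity and no boundary analysis are ever needed. To repair your argument you would have to abandon the bijection-plus-termwise strategy and reproduce this generating-function computation.
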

\begin{proof}
The proof of this lemma is similar to \cite[Lemma 3.12]{A}. \medskip

Put 
$$
F^{(r)}(t):=\sum_{i=0}^{\infty}A_i^{(r)}t^i
$$ and
$$
F_k^{(r)}(t):=\sum_{i \equiv k \mod p^{n-d}}A_i^{(r)}t^i=p^{-n+d}\sum_{u=0}^{p^{n-d}-1}\zeta^{-ku}F(\zeta^u t)
$$ where $\zeta$ is a primitive $p^{n-d}$ root of unity. For simplicity, we write $F(t)$ and $F_k(t)$ for $F^{(0)}(t)$ and $F_k^{(0)}(t),$ respectively. \medskip

Since we suppose Conjecture \ref{Dwork-trans-conj} is true, we have
$$
\begin{aligned}
&\frac{F(t)}{F^{(n-d)}(t^{p^{n-d}})} \\
=&\frac{F(t)}{F^{(1)}(t^p)}\frac{F^{(1)}(t^p)}{F^{(2)}(t^{p^2})}\cdots \frac{F^{(n-d-1)}(t^{p^{n-d-1
}})}{F^{(n-d)}(t^{p^{n-d}})} \\
=&\pm((-1)^s t)^{l_0}\left.\frac{F(t)}{F^{(1)}(t^p)}\right|_{t^{-1}}\cdot (\pm)((-1)^s t^p)^{l_1}\left.\frac{F^{(1)}(t^p)}{F^{(2)}(t^{p^2})} \right|_{t^{-1}}\\ 
& \quad \cdots (\pm)((-1)^st^{p^{n-d-1}})^{l_{n-d-1}}\left.\frac{F^{(n-d-1)}(t^{p^{n-d-1
}})}{F^{(n-d)}(t^{p^{n-d}})}\right|_{t^{-1}} \\  
=&(\pm 1)^{n-d}(-1)^{s(l_0+l_1+\cdots+l_{n-d-1})} t^{l_0+l_1p+\cdots+l_{n-d-1}p^{n-l-1}}\left.\frac{F(t)}{F^{(1)}(t^p)}\right|_{t^{-1}}\cdots \left.\frac{F^{(n-d-1)}(t^{p^{n-d-1
}})}{F^{(n-d)}(t^{p^{n-d}})}\right|_{t^{-1}}
\end{aligned}
$$
where $l_0, l_1,\cdots, l_{n-d-1}$ are the numbers in $\{0,1,\cdots,p-1\}$ such that
$a+l_0\equiv a^\prime+l_1\equiv \cdots \equiv a^{(n-d-1)}+l_{n-d-1}\equiv 0 \mod p.$ \medskip

From the Dwork congruence \cite[p.37, Theorem 2]{Dw}, we have
$$
\frac{F^{(i)}(t)}{F^{(i+1)}(t^p)}\equiv \frac{F^{(i)}(t)_{<p^m}}{F^{(i+1)}(t^p)_{<p^m}} \mod p^n
$$ when $m\geq n \geq 1.$ Therefore one has
$$
\frac{F^{(i)}(t^p)}{F^{(i+1)}(t^{p^2})} \equiv \frac{F^{(i)}(t^p)_{<p^{n+1}}}{F^{(i+1)}(t^{p^2})_{<p^{n+1}}} \mod p^n,
$$
$$
\frac{F^{(i)}(t^{p^2})}{F^{(i+1)}(t^{p^3})} \equiv \frac{F^{(i)}(t^{p^2})_{<p^{n+2}}}{F^{(i+1)}(t^{p^3})_{<p^{n+2}}} \mod p^n, \cdots
$$ \medskip

After modulo $p^{d+1}$ and writing $s^\prime=s(l_0+l_1+\cdots+l_{n-d-1})$ and $l^\prime=l_0+l_1p+\cdots+l_{n-d-1}p^{n-d-1},$we have
$$
\begin{aligned}
&\frac{F(t)}{F^{(n-d)}(t^{p^{n-d}})}\\
\equiv &(\pm 1)^{n-d}(-1)^{s^\prime}t^{l^\prime}\left.\frac{F(t)_{<p^n}}{F^{(1)}(t^p)_{<p^n}}\right|_{t^{-1}}\cdots \left.\frac{F^{(n-d-1)}(t^{p^{n-d-1
}})_{<p^n}}{F^{(n-d)}(t^{p^{n-d}})_{<p^n}}\right|_{t^{-1}} \mod p^{d+1}\mathbb{Z}_p[[t]] \\
=&(\pm 1)^{n-d} (-1)^{s^\prime}t^{l^\prime}\left. \frac{F(t)_{<p^n}}{F^{(n-d)}(t^{p^{n-d}})_{<p^n}} \right|_{t^{-1}}.
\end{aligned}
$$
Therefore
\begin{equation}\label{eq.1}
\frac{F(t)}{F^{(n-d)}(t^{p^{n-d}})}=(\pm 1)^{n-d}(-1)^{s^\prime}t^{l^\prime}\left. \frac{F(t)_{<p^n}}{F^{(n-d)}(t^{p^{n-d}})_{<p^n}} \right|_{t^{-1}}+p^{d+1}\sum_{i=0}^\infty a_i t^i
\end{equation} where $a_i\in \mathbb{Z}_p.$ \medskip

If we substitute $\zeta^u t$ for $t,$ we have 
\begin{equation}\label{eq.2}
\begin{aligned}
\frac{F(\zeta^{u}t)}{F^{(n-d)}(t^{p^{n-d}})}&=(\pm 1)^{n-d}(-1)^{s^\prime}(\zeta^u t)^{l^\prime}\left. \frac{F(t)_{<p^n}}{F^{(n-d)}(t^{p^{n-d}})_{<p^n}} \right|_{\zeta^{-u} t^{-1}}+p^{d+1}\sum_{i=0}^\infty a_i \zeta^{ui}t^i \\
&= (\pm 1)^{n-d}(-1)^{s^\prime}(\zeta^u t)^{l^\prime}\left. \frac{F(\zeta^{-u} t)_{<p^n}}{F^{(n-d)}(t^{p^{n-d}})_{<p^n}} \right|_{t^{-1}}+p^{d+1}\sum_{i=0}^\infty a_i \zeta^{ui}t^i.
\end{aligned}
\end{equation} \medskip

Dividing equation (\ref{eq.2}) by equation (\ref{eq.1}) and expanding the quotient, one has
$$
F(\zeta^u t)\left .F(t)_{<p^n}\right|_{t^{-1}}\cdot t^{p^n-1}-F(t)F(\left.\zeta^{-u} t)_{<p^n}\right|_{t^{-1}}\cdot {t^{p^n-1}}\cdot \zeta^{ul^\prime}=p^{d+1}\sum_{i=0}^{\infty}b_i(\zeta^u)t^i
$$ where $b_i(x)\in\mathbb{Z}_p[x]$ are polynomials which does not depend on $u.$ \medskip

Taking $\sum\limits_{u=0}^{p^{n-d}-1}\zeta^{-uk}(-)$ on both side and noting that $l^\prime\in\{0,1,\cdots, p^{n-d}-1\}$ is the number such that $a+l^\prime\equiv 0\mod p^{n-d}$, one has
$$
p^{n-d}\bigg(F_k(t)\left .F(t)_{<p^n}\right|_{t^{-1}}\cdot t^{p^n-1}-F(t)F_{-k-a}(\left.t)_{<p^n}\right|_{t^{-1}}\cdot {t^{p^n-1}}\bigg)=p^{n-d}\cdot p^{d+1}g(x).
$$ for some power series $g(x).$ \medskip

Therefore
$$
\frac{F_k(t)}{F(t)}\equiv \left. \frac{F_{-k-a}(t)_{<p^n}}{F(t)_{<p^n}}\right|_{t^{-1}} \mod p^{d+1}.
$$
On the other hand, using a similar proof, one has
$$
\frac{F_k(t)}{F(t)}\equiv \frac{F_k(t)_{<p^n}}{F(t)_{<p^n}} \mod p^{d+1},
$$(cf. the last paragraph in the proof of \cite[Lemma 3.12]{A}) so we conclude
\begin{equation}\label{eq.3}
\frac{F_k(t)_{<p^n}}{F(t)_{<p^n}} \equiv \left. \frac{F_{-k-a}(t)_{<p^n}}{F(t)_{<p^n}}\right|_{t^{-1}} \mod p^{d+1}.
\end{equation}
Using (\ref{eq.3}), one has
\begin{equation}\label{eq.4}
  F_k(t)\left .F(t)_{<p^n}\right|_{t^{-1}}\cdot t^{p^n-1}=F(t)F_{-k-a}(\left.t)_{<p^n}\right|_{t^{-1}}\cdot {t^{p^n-1}} \mod p^{d+1}.  
\end{equation}

Then this lemma follows from comparing the coefficients of $t^m$ in (\ref{eq.4}).
\end{proof}

\begin{thm}\label{conj2_to_conj1}
Conjecture \ref{Dwork-trans-conj} implies Conjecture \ref{transformation-conj}.
\end{thm}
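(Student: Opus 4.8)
The plan is to prove the identity modulo $p^n$ for every $n$ and then pass to the limit in the ring $W\langle t, t^{-1}, h(t)^{-1}\rangle$. First I would invoke the congruence relations: by Theorem~\ref{cong_rel_1} and Theorem~\ref{cong_rel_2} (taking $c\in 1+qW$, with the usual caveat at $p=2$) one has $\mathscr{F}^{(\sigma)}_a(t)\equiv G^{(\sigma)}_a(t)_{<p^n}/F_a(t)_{<p^n}$ and $\widehat{\mathscr{F}}^{(\widehat\sigma)}_a(t)\equiv \widehat{G}^{(\widehat\sigma)}_a(t)_{<p^n}/F_a(t)_{<p^n}$ modulo $p^n$. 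Hence the desired equality $\mathscr{F}^{(\sigma)}_a(t)=-\widehat{\mathscr{F}}^{(\widehat\sigma)}_a(t^{-1})$ is equivalent, modulo $p^n$, to a congruence between the two truncated quotients. Multiplying out by the truncated denominators (units in $W\langle t,t^{-1},h(t)^{-1}\rangle$) and by $t^{p^n-1}$ to clear the negative powers introduced by $\omega$, I reduce to a congruence of Laurent polynomials and compare the coefficient of each $t^m$. Writing $F_a(t)=\sum A_k t^k$, $G^{(\sigma)}_a(t)=\sum B_k t^k$, $\widehat{G}^{(\widehat\sigma)}_a(t)=\sum\widehat{B}_k t^k$ and setting $M:=m-(p^n-1)$, this coefficient comparison is exactly
\[
\sum_{\substack{k-j=M\\ 0\le k,j\le p^n-1}}B_k A_j \equiv -\sum_{\substack{i-k=M\\ 0\le i,k\le p^n-1}}\widehat{B}_k A_i \pmod{p^n}.
\]

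Next I would pass from $B_k,\widehat{B}_k$ to the interpolated functions $\beta,\widehat\beta$ of Section~3. For $k\ge 1$ one has $B_k=\beta_k A_k$ and $\widehat{B}_k=\widehat\beta_k A_k$ by definition of $f,\widehat f$, so both sides of the displayed congruence become sums, over a common range of $r$, of $\beta$-weighted products $A_rA_{r+M}$: the left side contributes $\beta_{r+M}A_rA_{r+M}$ and the right side $\widehat\beta_{r}A_rA_{r+M}$. Invoking Lemma~\ref{beta+hbeta} in the form $\widehat\beta_r=-\beta_{-r-a}$, the whole identity collapses to
\[
\sum_{r}\bigl(\beta_{r+M}-\beta_{-r-a}\bigr)\,w_r\equiv 0\pmod{p^n},\qquad w_r:=A_rA_{r+M}.
\]
The constant terms $k=0$ must be treated separately, since $B_0$ and $\widehat{B}_0$ are the digamma/Euler-constant values rather than $\beta_0,\widehat\beta_0$; one checks that $\psi_p$ and $\gamma_p$ are defined precisely so that these contributions remain consistent.

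The heart of the argument is then a telescoping over $p$-adic levels. For a residue $k$ modulo $p^n$ I would expand $\beta_{\tilde k}=\beta_{\tilde 0}+\sum_{d=1}^{n}\bigl(\beta_{\widetilde{k\bmod p^{d}}}-\beta_{\widetilde{k\bmod p^{d-1}}}\bigr)$, where by Theorem~\ref{congBA} each difference is divisible by $p^{d-1}$ and depends only on $k\bmod p^{d}$. Substituting this into $\sum_r\beta_{r+M}w_r$ and into $\sum_r\beta_{-r-a}w_r$, and regrouping the weights $w_r$ according to the residue of $r+M$ (resp.\ of $-r-a$) modulo $p^{d}$, each level-$d$ block becomes precisely the pair of sums appearing on the two sides of Lemma~\ref{A-cong} with its parameter taken to be $n-d$; the two constant blocks $\beta_{\tilde 0}\sum_r w_r$ cancel because the total weight is the same on both sides. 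Lemma~\ref{A-cong} then gives that the level-$d$ difference of these sums is divisible by $p^{(n-d)+1}$, while the accompanying $\beta$-difference is divisible by $p^{d-1}$; their product is divisible by $p^{(d-1)+(n-d+1)}=p^{n}$. Summing over $d$ yields the required congruence, and letting $n\to\infty$ proves Conjecture~\ref{transformation-conj}.

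The main obstacle is exactly this last combination: one must set up the digit-telescoping of $\beta$ so that the $p^{d-1}$-variation of $\beta$ across residue classes modulo $p^{d}$ is paired with the $p^{(n-d)+1}$-congruence furnished by Lemma~\ref{A-cong}, the two orders adding up to $n$ for every $d$ (and in particular at the endpoints $d=1$ and $d=n$). Matching the index ranges in the coefficient comparison with those in Lemma~\ref{A-cong} under the reflection $j\mapsto p^n-j-1$, handling the $k=0$ constant terms, and keeping track of the formal map $\omega$ defining $t\mapsto t^{-1}$ together with the unit $h(t)^{-1}$ are the remaining technical points, none of which should present a genuine difficulty once the telescoping is in place.
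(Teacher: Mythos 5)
Your proposal is correct and follows essentially the same route as the paper: reduce via the congruence relations (Theorems \ref{cong_rel_1}, \ref{cong_rel_2}) to a coefficient-wise congruence, rewrite $B_k,\widehat{B}_k$ through $\beta,\widehat{\beta}$, eliminate $\widehat{\beta}$ with Lemma \ref{beta+hbeta}, and then pair the $p^{d-1}$-divisibility of $\beta$-differences across residue classes with the $p^{n-d+1}$-divisibility from Lemma \ref{A-cong} so the orders sum to $n$ at every level. Your explicit digit-telescoping of $\beta$ is just a repackaging of the paper's iterative replacement of $\beta_k$ by $\beta_{k \bmod p^{n-1}}, \beta_{k\bmod p^{n-2}},\dots$, and your uniform treatment of all $m$ via $M=m-(p^n-1)$ absorbs the reflection argument the paper uses for $p^n-1\leq m\leq 2(p^n-1)$.
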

\begin{proof}
By congruence relations of $\mathscr{F}^{(\sigma)}_{a}(t)$ and $\widehat{\mathscr{F}}^{\;(\widehat{ \sigma})}_a(t)$ (Theorem \ref{cong_rel_1} and Theorem \ref{cong_rel_2}), Conjecture \ref{transformation-conj} is equivalent to the statement 
$$
\frac{G^{(\sigma)}_a(t)_{<p^n}}{F_a(t)_{<p^n}}\equiv -\left.\frac{\widehat{G}^{\; (\widehat{\sigma})}_a(t)_{<p^n}}{F_a(t)_{<p^n}}\right|_{t^{-1}} \mod p^nW[[t]]
$$ for all $n\in \mathbb{Z}_{\geq 0}.$ So it suffices to show that
$$
\sum_{\substack{i+j=m \\ 0\leq i, j \leq p^n-1}}B_iA_{p^n-j-1}+\widehat{B}_{p^n-j-1}A_i\equiv 0 \mod p^n.
$$ for any $m$ with $0\leq m\leq 2(p^n-1).$ \medskip

First, we suppose $0\leq m\leq p^n-1.$ Using the relation $\beta_k=B_k/A_k$ and $\widehat{\beta}_k=\widehat{B}_k/A_k$ when $k\in\mathbb{Z}_{\geq 0},$ we write
\begin{equation}\label{beta_equation}
\begin{aligned}
&\sum_{\substack{i+j=m \\ 0\leq i, j \leq m}}B_iA_{p^n-j-1}+\widehat{B}_{p^n-j-1}A_i \\
=&\sum_{\substack{i+j=m \\ 0\leq i,j \leq m}}\beta_iA_iA_{p^n-j-1}+\sum_{\substack{i^\prime+j^\prime=m \\ 0\leq i^\prime, j^\prime \leq m}}\widehat{\beta}_{p^n-j^\prime-1}A_{i^\prime}A_{p^n-j^\prime-1} \\
=& \sum_{k=0}^{p^n-1}\bigg[\sum_{\substack{i\equiv k \mod p^n \\ 0\leq i\leq m}}\beta_iA_iA_{p^n-j-1}+\sum_{\substack{p^n-j^\prime-1\equiv -k-a \mod p^n \\ 0\leq j^\prime \leq m}}\widehat{\beta}_{p^n-j^\prime-1}A_{i^\prime}A_{p^n-j^\prime-1}\bigg] \\
\equiv &\sum_{k=0}^{p^n-1}\beta_k\bigg[\sum_{\substack{i\equiv k\mod p^n \\ 0\leq i \leq m \\ i+j=m}}A_iA_{p^n-j-1}-\sum_{\substack{p^n-j^\prime-1\equiv -k-a \mod p^n\\ 0\leq j^\prime \leq m \\ i^\prime+j^\prime=m}}A_{i^\prime}A_{p^n-j^\prime-1}\bigg] \mod{p^n}.
\end{aligned}
\end{equation}

By Lemma \ref{A-cong}, we have
$$
\sum_{\substack{i\equiv k\mod p^n \\ 0\leq i \leq m \\ i+j=m}}A_iA_{p^n-j-1}-\sum_{\substack{p^n-j^\prime-1\equiv -k-a \mod p^n\\ 0\leq j^\prime \leq m \\ i^\prime+j^\prime=m}}A_{i^\prime}A_{p^n-j^\prime-1}\equiv 0 \mod p.
$$ So we can rewrite the equation (\ref{beta_equation}) as
$$
\sum_{k^\prime=0}^{p^{n-1}-1}\sum_{k\equiv k^\prime \mod p^{n-1}}\beta_k\bigg[\sum_{\substack{i\equiv k\mod p^n \\ 0\leq i \leq m \\ i+j=m}}A_iA_{p^n-j-1}-\sum_{\substack{p^n-j^\prime-1\equiv -k-a \mod p^n\\ 0\leq j^\prime \leq m \\ i^\prime+j^\prime=m}}A_{i^\prime}A_{p^n-j^\prime-1}\bigg]
$$
and this equals
$$
\begin{aligned}
&\sum_{k^\prime=0}^{p^{n-1}-1}\beta_{k^\prime}\sum_{k\equiv k^\prime \mod p^{n-1}}\bigg[\sum_{\substack{i\equiv k\mod p^n \\ 0\leq i \leq m \\ i+j=m}}A_iA_{p^n-j-1}-\sum_{\substack{p^n-j^\prime-1\equiv -k-a \mod p^n\\ 0\leq j^\prime \leq m \\ i^\prime+j^\prime=m}}A_{i^\prime}A_{p^n-j^\prime-1}\bigg] \\
&\equiv \sum_{k^\prime=0}^{p^{n-1}-1}\beta_{k^\prime} \bigg[\sum_{\substack{i\equiv k\mod p^{n-1} \\ 0\leq i \leq m \\ i+j=m}}A_iA_{p^n-j-1}-\sum_{\substack{p^n-j^\prime-1\equiv -k-a \mod p^{n-1} \\ 0\leq j^\prime \leq m \\ i^\prime+j^\prime=m}}A_{i^\prime}A_{p^n-j^\prime-1}\bigg] \mod p^n.
\end{aligned}
$$
Again by Lemma \ref{A-cong} and using the same argument, the equation equals
$$
\sum_{k^{\prime\prime}=0}^{p^{n-2}-1}\beta_{k^{\prime\prime}} \bigg[\sum_{\substack{i\equiv k\mod p^{n-2} \\ 0\leq i \leq m \\ i+j=m}}A_iA_{p^n-j-1}-\sum_{\substack{p^n-j^\prime-1\equiv -k-a \mod p^{n-2} \\ 0\leq j^\prime \leq m \\ i^\prime+j^\prime=m}}A_{i^\prime}A_{p^n-j^\prime-1}\bigg] \mod p^n.
$$

Continuing this process, we finally get the equation equals
$$
\sum_{k=0}^{p-1}\beta_k\bigg[\sum_{\substack{i\equiv k\mod p \\ 0\leq i \leq m \\ i+j=m}}A_iA_{p^n-j-1}-\sum_{\substack{p^n-j^\prime-1\equiv -k-a \mod p \\ 0\leq j^\prime \leq m \\ i^\prime+j^\prime=m}}A_{i^\prime}A_{p^n-j^\prime-1}\bigg]\equiv 0 \mod p^n.
$$

Thus
$$
\sum_{\substack{i+j=m \\ 0\leq i, j \leq m}}B_i A_{p^n-j-1}+\widehat{B}_{p^n-j-1}A_i\equiv 0 \mod p^n.
$$

On the other hand, if $p^n-1\leq m \leq 2(p^n-1)$, we put $i^\prime=p^n-i-1$ and $j^\prime=p^n-j-1$. Then
$$
\sum_{\substack{i+j=m \\ 0\leq i, j \leq p^n-1}}B_iA_{p^n-j-1}+\widehat{B}_{p^n-j-1}A_i
$$ becomes
$$
\sum_{\substack{i^\prime+j^\prime=2(p^n-1)-m \\ 0\leq i^\prime, j^\prime \leq p^n-1}}B_{p^n-i^\prime-1}A_{j^\prime}+\widehat{B}_{j^\prime}A_{p^n-i^\prime-1}. 
$$ Let $m^\prime=2(p^n-1)-m$ and interchange the role of $B_k$ and $\widehat{B}_k$, we find this is just the case above. Therefore the result follows. 
\end{proof}

\begin{cor}
Conjecture \ref{transformation-conj} is true when $s=1$.
\end{cor}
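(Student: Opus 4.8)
The plan is to derive the corollary by specializing the two main results of this section to $s=1$: Theorem \ref{Dwork-trans-thm} establishes exactly the $s=1$ instance of Conjecture \ref{Dwork-trans-conj}, and Theorem \ref{conj2_to_conj1} converts this into the $s=1$ instance of Conjecture \ref{transformation-conj}. No further computation should be required.

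First I would specialize $s=1$ in the statement of Conjecture \ref{Dwork-trans-conj}. Since $(-1)^s=-1$, the asserted identity reads $\mathscr{F}^{\rm Dw}_a(t)=(-t)^l\,\mathscr{F}^{\rm Dw}_a(t^{-1})$ for odd $p$ and $\mathscr{F}^{\rm Dw}_a(t)=\pm(-t)^l\,\mathscr{F}^{\rm Dw}_a(t^{-1})$ for $p=2$. This is precisely Theorem \ref{Dwork-trans-thm}, in which $F_a(t)=(1-t)^{-a}$ is the genuine one-variable hypergeometric series; hence Conjecture \ref{Dwork-trans-conj} is unconditionally true when $s=1$.

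Next I would feed this into Theorem \ref{conj2_to_conj1}. The one point that must be checked is that the implication is $s$-by-$s$: in Lemma \ref{A-cong} and in the proof of Theorem \ref{conj2_to_conj1} the coefficients $A_i$ are those of $F_{a,\dots,a}(t)$ with exactly $s$ arguments, and the Dwork formula is invoked through the sign $((-1)^s t)^{l_i}$ carrying that same $s$; the sign ambiguity for $p=2$ is harmless because the factor $(\pm 1)^{n-d}$ enters equation \eqref{eq.1} and its twisted companion identically and cancels upon division. Thus the truth of Conjecture \ref{Dwork-trans-conj} for $s=1$ already forces Conjecture \ref{transformation-conj} for $s=1$. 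The main---and essentially the only---obstacle is this bookkeeping: confirming that the parameter $s$ is threaded consistently through Theorem \ref{conj2_to_conj1}, so that the $s=1$ Dwork formula of Theorem \ref{Dwork-trans-thm} suffices. Once that is verified, the corollary is immediate.
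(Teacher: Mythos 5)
Your proposal is correct and follows exactly the paper's own route: the corollary is obtained by combining Theorem \ref{Dwork-trans-thm} (which gives Conjecture \ref{Dwork-trans-conj} for $s=1$) with Theorem \ref{conj2_to_conj1}. The additional bookkeeping check that the parameter $s$ is threaded consistently through the implication is a reasonable sanity check but not a departure from the paper's argument.
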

\begin{proof}
This follows from Theorem \ref{conj2_to_conj1} and Example \ref{Dwork-trans-thm}.
\end{proof}

\begin{cor}
Suppose that the following two conditions:
\begin{enumerate}
    \item  $a\in N^{-1}\Bbb{Z}$ and $p\nmid N$ for some $N\geq 2$
    \item $0<a<1$
\end{enumerate}

Then Conjecture \ref{transformation-conj} is true when $s\geq 2$.
\end{cor}
\begin{proof}
This follows from Theorem \ref{conj2_to_conj1} and Theorem \ref{dw_trans_ncase}.
\end{proof}

\end{document}